\theoremstyle{definition}
\newtheorem{dfn}{Definition}[section]
\newtheorem{rmk}[dfn]{Remark}
\theoremstyle{plain}
\newtheorem{prop}[dfn]{Proposition}
\newtheorem{lem}[dfn]{Lemma}
\newtheorem{thm}[dfn]{Theorem}
\def \etale{\'{e}tale }
\def \etalec{\'{e}tale, }
\def \motimes{\otimes_{\max}}
\def \C{\mathbb{C}}
\def \N{\mathbb{N}}
\title{On nuclearity of $C^*$-algebras of Fell bundles over \'{e}tale groupoids}
\author{Takuya Takeishi}
\address{Department of Mathematical Sciences, University of Tokyo}
\email{takeishi@ms.u-tokyo.ac.jp}
\date{}
\begin{document}

\begin{abstract}
In this paper, we show that if $E$ is a Fell bundle over an amenable \etale locally compact Hausdorff groupoid 
such that every fiber on the unit space is nuclear, then $C^*_r(E)$ is also nuclear. 
In order to show this result, we introduce (minimal) tensor products of Fell bundles with fixed $C^*$-algebras. 
\end{abstract}

\maketitle

\section{Introduction}
It is well-known that groupoid $C^*$-algebras can realize most of $C^*$-algebras related to dynamical systems
---group $C^*$-algebras, graph algebras, uniform Roe algebras and so on. 
In particular, $C^*$-algebras related to discrete dynamical systems can be represented by \etale groupoids.  
On the other hand, the crossed product associated with a group action on a (non-commutative) $C^*$-algebra 
is an important construction which is not included above. 
The Fell bundle over groupoids, originally defined for imprimitivity theorems, is a unified construction of groupoid $C^*$-algebras and 
crossed products. 
Hence, by Fell bundle $C^*$-algebras, we can construct most of $C^*$-algebras which we usually handle. 

There is a principle that the amenability of group-like objects corresponds to the nuclearity of $C^*$-algebras. 
For example, the following theorem is well-known (see e.g.~\cite{BO}): 
\begin{thm}\label{classical} 
The following hold: 
\begin{itemize}
\setlength{\itemsep}{-11pt}
\item[{\rm (i)}] A discrete group $\Gamma$ is amenable if and only if $C^*_r(\Gamma)$ is nuclear. \\
\item[{\rm (ii)}] An \etale locally compact Hausdorff groupoid $G$ is amenable if and only if $C^*_r(G)$ is nuclear. \\
\item[{\rm (iii)}] If an amenable discrete group $\Gamma$ acts on a nuclear $C^*$-algebra $A$, then the crossed product $A \rtimes_r \Gamma$ is nuclear. \\
\end{itemize}
\end{thm}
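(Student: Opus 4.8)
The plan is to deduce all three statements from the completely positive approximation property (CPAP), which by the Choi--Effros theorem is equivalent to nuclearity: a $C^*$-algebra $B$ is nuclear iff there are nets of unital completely positive (u.c.p.) maps $\phi_i\colon B\to M_{n_i}(\C)$ and $\psi_i\colon M_{n_i}(\C)\to B$ with $\psi_i\circ\phi_i\to\mathrm{id}_B$ in the point--norm topology. In every part the amenability hypothesis supplies the combinatorial input --- a F{\o}lner-type net --- out of which such maps are manufactured; the reverse implications are the substantive ones. Note that (i) is the special case of (ii) where $G=\Gamma$ is a group regarded as an \etale groupoid over a point, so I would prove (i) first and then record the fibrewise modifications needed for (ii).

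For the forward direction of (i), fix a F{\o}lner sequence $(F_n)$ for $\Gamma$ and realise $C^*_r(\Gamma)\subseteq B(\ell^2\Gamma)$ through the left regular representation $\lambda$. Let $P_n$ be the orthogonal projection onto $\ell^2(F_n)$ and set $\phi_n(T)=P_nTP_n\in B(\ell^2F_n)\cong M_{|F_n|}(\C)$, which is u.c.p. Define $\psi_n\colon M_{|F_n|}(\C)\to C^*_r(\Gamma)$ by $\psi_n\big((a_{s,t})_{s,t\in F_n}\big)=|F_n|^{-1}\sum_{s,t\in F_n}a_{s,t}\,\lambda_s\lambda_t^{*}$; this is u.c.p., being of the form $a\mapsto\sum_{s,t}a_{s,t}\,c_s^{*}c_t$ with $c_s=|F_n|^{-1/2}\lambda_s^{*}$ and $\sum_s c_s^{*}c_s=1$. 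A direct computation gives $\psi_n\circ\phi_n(\lambda_g)=\tfrac{|F_n\cap g^{-1}F_n|}{|F_n|}\,\lambda_g$, which converges to $\lambda_g$ by the F{\o}lner condition; as the maps are contractive and $\mathrm{span}\{\lambda_g:g\in\Gamma\}$ is dense, $\psi_n\circ\phi_n\to\mathrm{id}$ in point--norm, and so $C^*_r(\Gamma)$ is nuclear. Statement (iii) follows by the same device with one extra layer: realise $A\subseteq B(H)$ and $A\rtimes_r\Gamma\subseteq B(\ell^2\Gamma\otimes H)$, compress to $\ell^2(F_n)\otimes H$ to land in $M_{|F_n|}(\C)\otimes A$, post-compose with $\mathrm{id}\otimes\phi_i$ for a CPAP pair $(\phi_i,\psi_i)$ witnessing nuclearity of $A$ so as to reach the finite-dimensional algebra $M_{|F_n|}(\C)\otimes M_{k_i}(\C)$, and use a F{\o}lner-type map back into $A\rtimes_r\Gamma$; the F{\o}lner estimate again forces $\psi\circ\phi(a\lambda_g)\to a\lambda_g$. (Along the way, amenability of $\Gamma$ also gives $A\rtimes\Gamma=A\rtimes_r\Gamma$.)

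For the forward direction of (ii) the recipe is the same in spirit. Topological amenability of the \etale groupoid $G$ provides a net $(\xi_i)$ in $C_c(G)$ whose matrix coefficients $\gamma\mapsto\langle\xi_i,\lambda_\gamma\xi_i\rangle$ tend to $1$ uniformly on compacta while $\|\langle\xi_i,\xi_i\rangle\|_\infty$ stays bounded. The associated Herz--Schur-type u.c.p.\ maps $C^*_r(G)\to C^*_r(G)$ converge to the identity and, using that $G$ is \'{e}tale, they factor (up to the obvious compressions and amplifications) through $C^*$-algebras of the form $C_0(V)\otimes M_n(\C)$ with $V\subseteq G^{(0)}$ open. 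Since each such algebra is nuclear and the CPAP is stable under the operations involved, a CPAP for $C^*_r(G)$ results.

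The hard part will be the reverse implications in (i) and (ii): nuclearity of the $C^*$-algebra forcing amenability of the group or groupoid, which is out of reach of elementary approximation. The route is: nuclearity of $C^*_r(\Gamma)$ (resp.\ $C^*_r(G)$) implies, via the Choi--Effros--Connes characterisation, that its bidual is injective, hence so is the weak closure $\lambda(\Gamma)''$ (resp.\ $W^*(G)$) in the canonical trace representation; Connes' theorem then makes this von Neumann algebra hyperfinite, and if $E\colon B(\ell^2\Gamma)\to\lambda(\Gamma)''$ is a u.c.p.\ projection, then $\tau\circ E$ restricts, on the multiplication operators $\ell^\infty(\Gamma)\subseteq B(\ell^2\Gamma)$, to a left-invariant mean, so $\Gamma$ is amenable. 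For the groupoid one argues in parallel, but with the extra subtlety that the relevant notion here is \emph{topological} amenability; one must therefore invoke Anantharaman-Delaroche's theorem that, for \etale locally compact Hausdorff groupoids, topological amenability coincides with the measurewise / von~Neumann-algebraic notion obtained this way. That equivalence, together with the injectivity argument, is the technical core of the statement. All of this is classical, and I would simply cite~\cite{BO} and the references therein.
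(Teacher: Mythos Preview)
The paper does not give its own proof of this theorem: it is stated in the introduction as well known, with a reference to \cite{BO}, and is then subsumed (for the forward directions) by the paper's main Theorem~\ref{nuclear}. Your sketch is essentially the standard argument from \cite{BO} and is correct in outline, so there is no genuine error to report.

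That said, it is worth contrasting your strategy with the way the paper actually establishes the generalisations of these facts. For the forward implications you build CPAP factorisations through matrix algebras using F{\o}lner sets and compressions; the paper instead proves nuclearity by showing directly that $C^*_r(E)\otimes_{\max}A\to C^*_r(E)\otimes A$ is injective for every $A$, using c.c.p.\ multiplier maps $m_h$ associated to compactly supported positive definite functions (Lemma~\ref{ccp}) together with the tensor-product bundle machinery of Section~3. This bypasses any explicit finite-dimensional factorisation. For the converse in (ii), you go through injectivity of the bidual, Connes' theorem, and Anantharaman-Delaroche's equivalence of amenability notions; the paper, in its final theorem on Fell line bundles (which specialises to $C^*_r(G)$ when the bundle is trivial), instead extracts positive definite functions $\zeta^*\!*\zeta$ witnessing amenability directly from a single c.c.p.\ factorisation $C^*_r(E)\to M_n(\C)\to C^*_r(E)$, following \cite[Theorem~5.6.18]{BO}. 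Your route is heavier (it imports deep von~Neumann algebra results) but perfectly valid; the paper's route is more elementary and self-contained.

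One small soft spot in your sketch: in the forward direction of (ii) you assert that the Herz--Schur-type maps ``factor (up to the obvious compressions and amplifications) through $C^*$-algebras of the form $C_0(V)\otimes M_n(\C)$''. This is the right idea but is not automatic from the multiplier map $m_h$ alone; one must cover the support of $h$ by finitely many bisections and use a partition of unity to make the factorisation explicit. It would be worth spelling this out, since it is exactly where the \'etale hypothesis is used.
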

In this paper, we show that if $E$ is a Fell bundle over an amenable \etale locally compact Hausdorff groupoid 
such that every fiber on the unit space is nuclear, then $C^*_r(E)$ is also nuclear (Theorem \ref{nuclear}). 
Hence Theorem \ref{classical} is included in Theorem \ref{nuclear}, 
and Theorem \ref{nuclear} is the most general nuclearity theorem of $C^*$-algebras similar to discrete crossed products. 

In the case of discrete groups, Abadie-Vicens (\cite{A}) and Quigg (\cite[Corollary 2.17]{Q}) have already proved this theorem independently. 
Abadie-Vicens uses the method of tensor products of Fell bundles (over groups), 
while Quigg's proof relies on the correspondence between Fell bundles and coactions, and duality theorem for cocrossed products. 
The proof in this paper is relatively close to the one of Abadie-Vicens, but more simple and based on the method in Brown-Ozawa's book (\cite{BO}). 

\section{Preliminaries}

First, we fix some terminologies and notations. 
Let $G$ be a topological groupoid. 
An open set $S\subset G$ is called a {\it bisection} if the source and range maps  on $S$ are open homeomorphisms onto its ranges. 
A topological groupoid $G$ is said to be {\it \etale}if $G$ has an open base consisting of bisections. 
Throughout this paper, groupoids are always assumed to be \etalec locally compact and Hausdorff. 

For tensor products, the usual notation $\otimes$ denotes the minimal tensor product, 
and the algebraic tensor product is denoted by $\odot$. 

We use the terminology ``{\it Hilbert $C^*$-bimodules}" in the following sense: 

\begin{dfn}
Let $A,B$ be $C^*$-algebras. An $A$-$B$-bimodule $V$ is said to be a {\it Hilbert $A$-$B$-bimodule} if it satisfies the following:
\begin{itemize}
\setlength{\itemsep}{-11pt}
\item[{\rm (i)}] The module V is a left Hilbert $A$-module and a right Hilbert $B$-module. \\
\item[{\rm (ii)}] The equalities $\langle ax,y\rangle_r = \langle x,a^*y\rangle_r$ 
and $_l\langle x,yb \rangle = {}_l\langle xb^*,y\rangle$ hold for every $a \in A$, $b \in B$ and $x,y \in V$. \\
\item[{\rm (iii)}] We have $_l\langle x,y \rangle z = x\langle y,z \rangle_r$ for every $x,y,z \in V$. 
\end{itemize}
Here $_l\langle\ ,\ \rangle$ and $\langle\ ,\ \rangle_r$ denote the $A$-valued and $B$-valued inner products, respectively. 
\end{dfn}

Hilbert $C^*$-bimodules with the fullness condition are called imprimitivity biimodules. 
For elementary knowledge for imprimitivity bimodules, one can consult the book of Raeburn and Williams (\cite[Chapter 3]{RW}). 
Note that some propositions for imprimitivity bimodules in \cite{RW} are true for Hilbert $C^*$-bimodules (not all propositions, needless to say). 
For example, the norms of a Hilbert $C^*$-bimodule induced by the left and right inner products coincide (cf.~\cite[Proposition 3.11]{RW}). 
We tacitly use such propositions to Hilbert $C^*$-bimodules.

\subsection{Fell bundles}
If $X$ is a topological space and $p:E\rightarrow X$ is a bundle, the fiber on $x \in X$ is denoted by $E_x$. 
All Banach spaces are tacitly assumed to be complex Banach spaces. 
For the definition of Banach bundles, see the book of Fell and Doran (\cite[Chapter II, Definition 13.4]{FD1}). 

\begin{dfn}(cf.~\cite{K}) \label{fell}
Let $G$ be a groupoid and $p:E\rightarrow G$ be a Banach bundle. 
Set $E^{(2)} =\{ (e_1,e_2) \in E \times E \ |\ (p(e_1),p(e_2)) \in G^{(2)}\}$. 
Then $p:E\rightarrow G$ is a {\it Fell bundle} if there are continuous maps $E^{(2)} \rightarrow E,(e_1,e_2)\mapsto e_1e_2$ (called a {\it multiplication}) 
and $E\rightarrow E,e\mapsto e^*$(called an {\it involution}) which satisfy the following ten conditions:
\begin{itemize}
\setlength{\itemsep}{-11pt}
\item[{\rm (i)}] $p(e_1e_2)=p(e_1)p(e_2)$ for every $(e_1,e_2) \in E^{(2)}$. \\
\item[{\rm (ii)}] $p(e^*)=p(e)^{-1}$ for every $e \in E$. \\
\item[{\rm (iii)}] The multiplication $E_{\gamma_1}\times E_{\gamma_2} \rightarrow E_{\gamma_1 \gamma_2}$ is bilinear
 for every $(\gamma_1, \gamma_2) \in G^{(2)}$. \\
\item[{\rm (iv)}] The involution $E_{\gamma}\rightarrow E_{\gamma^{-1}}$ is conjugate-linear for every $\gamma \in G$. \\
\item[{\rm (v)}] $e_1(e_2e_3) = (e_1e_2)e_3$ for appropriate $e_1,e_2,e_3 \in E$. \\
\item[{\rm (vi)}] $e^{**}=e$ for every $e \in E$. \\
\item[{\rm (vii)}] $(e_1e_2)^*=e_2^*e_1^*$ for every $(e_1,e_2) \in E^{(2)}$. \\
\item[{\rm (viii)}] $\|e_1e_2\| \leq \|e_1\|\|e_2\|$ for every $(e_1,e_2) \in E^{(2)}$. \\
\item[{\rm (ix)}] $\|e^*e\|=\|e\|^2$ for every $e \in E$. \\
\item[{\rm (x)}] $e^*e\geq 0$ for every $e \in E$. 
\end{itemize}
\end{dfn}

Note that the fibers on the unit space $G^{(0)}$ are $C^*$-algebras by the axioms (i)--(ix), hence the axiom (x) makes sense. 
The restriction of $E$ on $G^{(0)}$ is denoted by $E^{(0)}$. 
From the above remark, $E^{(0)}$ is a $C^*$-algebraic bundle over $G^{(0)}$. 
The fiber $E_{\gamma}$ on $\gamma \in G$ is a Hilbert $E_{r(\gamma)}$-$E_{s(\gamma)}$-bimodule. 
Here and henceforth, we omit the projection $p$ and simply say that $E$ is a Fell bundle. 

Let $E$ be a Fell bundle over a groupoid $G$. We denote by $C_c(E)$ the space of compactly-supported sections, 
and define a multiplication and an involution on $C_c(E)$ by
\begin{equation*}
f*g(\gamma) = \sum_{\gamma = \alpha \beta} f(\alpha)g(\beta),\ f^*(\gamma) = f(\gamma^{-1})^*
\end{equation*}
for $f,g \in C_c(E)$. 
Then there is a unique $C^*$-completion $C^*_r(E)$ of $C_c(E)$ such that the natural restriction map $C_c(E)\rightarrow C_c(E^{(0)})$ 
extends to a {\it faithful} conditional expectation $C^*_r(E)\rightarrow C_0(E^{(0)})$. 
The $C^*$-algebra $C^*_r(E)$ is called the reduced $C^*$-algebra of $E$.

By definition, Fell bundles over (locally compact) groups are original Fell bundles (\cite{FD2}), 
and Fell bundles over locally compact spaces are $C^*$-algebraic bundles. 

See \cite{K} for the detail of basics of Fell bundles. 

\subsection{Construction of Bundles}
In this section, we introduce a basic technique to construct bundles. 
When one constructs a Fell bundle $E$ over a groupoid $G$ from the given fibers $\{E_{\gamma}\}_{\gamma \in G}$, 
one can specify continuous sections of $E$ instead of topologizing $E= \coprod_{\gamma} E_{\gamma}$ directly. 
The proof of the following proposition is based on \cite[Chapter VIII, 2.4]{FD2}.  

\begin{prop} \label{construction}
Let $G$ be a groupoid. 
Let $E = \coprod_{\gamma} E_{\gamma}$ be an untopologized Fell bundle over $G$, 
i.e., every $E_{\gamma}$ is a Banach space and $E$ has a multiplication and an involution satisfying 
{\rm (i)--(x)} in {\rm Definition \ref{fell}}. 
Let $A_{0}$ be a $*$-algebra of compactly-supported sections of $E$. 
Assume the following:
\begin{itemize}
\item[{\rm (i)}] The subset $\{ f(\gamma)\ |\ f \in A_0 \}$ is dense in $E_{\gamma}$ for any $\gamma \in G$. 
\item[{\rm (ii)}] The function $\gamma \in G \longmapsto \| f(\gamma) \|$ is continuous for any $f \in A_0$. 
\end{itemize}
Then, there exists a unique topology of $E$ which makes $E$ a Fell bundle and all sections in $A_0$ continuous. 
\end{prop}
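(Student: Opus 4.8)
The plan is to invoke the standard existence-and-uniqueness criterion for topologies on Banach bundles, namely \cite[Chapter II, Theorem 13.18]{FD1} (the "Fell–Doran construction theorem"), which asserts that given a set $E = \coprod_\gamma E_\gamma$ of Banach spaces over a topological space (here $G$) together with a family $\Gamma_0$ of sections, there is a unique topology making $E$ a Banach bundle with all elements of $\Gamma_0$ continuous, provided that (a) for each $\gamma$ the set $\{f(\gamma) : f \in \Gamma_0\}$ is dense in $E_\gamma$, and (b) for each $f \in \Gamma_0$ the function $\gamma \mapsto \|f(\gamma)\|$ is continuous. Our hypotheses (i) and (ii) are exactly (a) and (b) with $\Gamma_0 = A_0$, so this immediately yields a unique topology on $E$ under which $p : E \to G$ is a Banach bundle and every section in $A_0$ is continuous. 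The remaining work is to check that the algebraic multiplication $E^{(2)} \to E$ and involution $E \to E$, which are already defined on the fibers, are continuous for this topology; the $C^*$-bundle axioms (i)–(x) hold fiberwise by assumption, so continuity of these two maps is all that is needed to conclude that $E$ is a Fell bundle.

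For continuity of the involution, I would use the local description of the bundle topology supplied by the construction theorem: a net $e_\lambda \to e$ in $E$ with $p(e_\lambda) \to p(e) = \gamma$ iff there is $f \in A_0$ with $f(\gamma) = e$ (approximately, using density) and $\|e_\lambda - f(p(e_\lambda))\| \to 0$. Given $e_\lambda \to e$, pick $f \in A_0$ close to $e$ at $\gamma$; then $f^* \in A_0$ since $A_0$ is a $*$-algebra, $p(f^*(p(e_\lambda))) = p(e_\lambda)^{-1} \to \gamma^{-1}$ because inversion on $G$ is continuous, and $\|e_\lambda^* - f^*(p(e_\lambda))\| = \|(e_\lambda - f(p(e_\lambda)))^*\| = \|e_\lambda - f(p(e_\lambda))\| \to 0$ by the conjugate-linearity and isometry of the fiberwise involution (axioms (iv), (ix)); hence $e_\lambda^* \to e^*$. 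For continuity of the multiplication I would argue analogously on $E^{(2)}$: given $(e_\lambda, e_\lambda') \to (e, e')$ with the product of base points defined, choose $f, g \in A_0$ approximating $e, e'$ at the relevant base points, note $f*g \in A_0$, and estimate
\[
\|e_\lambda e_\lambda' - (f*g)(p(e_\lambda)p(e_\lambda'))\|
\le \|e_\lambda e_\lambda' - f(p(e_\lambda))e_\lambda'\| + \|f(p(e_\lambda))e_\lambda' - f(p(e_\lambda))g(p(e_\lambda'))\| + (\text{cross terms}),
\]
bounding each piece by submultiplicativity (axiom (viii)) together with local boundedness of $\|e_\lambda\|$ and $\|e_\lambda'\|$ (continuity of the norm on a Banach bundle) and the convergences $\|e_\lambda - f(p(e_\lambda))\| \to 0$, $\|e_\lambda' - g(p(e_\lambda'))\| \to 0$. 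One subtlety is that $f*g$ is a sum over factorizations $\gamma = \alpha\beta$, so near a point of $G^{(2)}$ one should work in a bisection neighborhood (using that $G$ is \'etale) so that this sum is locally a single term $f(\alpha)g(\beta)$ with $\alpha, \beta$ depending continuously on $\gamma$; this is where the \'etale hypothesis is genuinely used.

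Uniqueness is already delivered by the Fell–Doran theorem: any topology making $E$ a Banach bundle with all of $A_0$ continuous is uniquely determined, and the Fell-bundle structure maps are then forced. The main obstacle I anticipate is not conceptual but bookkeeping: organizing the multiplication-continuity estimate cleanly, in particular handling the convolution sum $f*g(\gamma) = \sum_{\gamma = \alpha\beta} f(\alpha)g(\beta)$ locally and verifying that the resulting local section of $E$ (obtained by composing continuous maps into $E \times_G E$ with the fiber multiplication) is compactly supported and lies in a suitable completion of $A_0$, so that the approximation argument via the bundle topology applies. This is exactly the point where \cite[Chapter VIII, 2.4]{FD2} does the analogous work for $C^*$-algebraic bundles over groups, and the groupoid case differs only in that one localizes along bisections.
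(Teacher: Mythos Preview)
Your proposal is correct and matches the paper's approach: invoke \cite[Chapter II, Theorem 13.18]{FD1} for the Banach bundle topology, then verify continuity of multiplication and involution by approximating with sections $f,g \in A_0$ and localizing through bisections. The paper resolves your anticipated bookkeeping obstacle by multiplying $f*g \in A_0$ by a scalar cutoff function $\varphi$ supported in a product of bisections $T_1T_2$ (so $h = \varphi \cdot (f*g)$ is automatically a continuous section by continuity of scalar multiplication on a Banach bundle, and $h(\alpha'\beta') = f(\alpha')g(\beta')$ for $(\alpha',\beta')$ near $(\alpha,\beta)$), then concludes via \cite[Chapter II, Proposition 13.12]{FD1}; no completion of $A_0$ is needed.
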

\begin{proof}
By \cite[Chapter II, Theorem 13.18]{FD1}, $E$ has a unique Banach bundle structure which makes all sections in $A_0$ continuous. 
We have to show the continuity of the multiplication and of the involution of $E$. 
We will show the continuity only for the multiplication and omit the proof for the involution, since the proof is almost the same. 
Let $\{a_i\}$ and $\{b_i\}$ be two nets in $E$ such that $(a_i, b_i) \in E^{(2)}$ and $(a_i, b_i)$ converges to $(a, b) \in E^{(2)}$. 
It suffices to show that $a_ib_i$ converges to $ab$. 

If $a=0$, then we have $\| a_ib_i \| \leq \| a_i \| \| b_i \| \rightarrow 0$ and hence $a_ib_i$ converges to $0=ab$, 
and the same holds if $b=0$. 
Thus we may assume $a,b \neq 0$.  
Let $\alpha_i = p(a_i)$, $\alpha = p(a)$, $\beta_i = p(b_i)$, and $\beta = p(b)$, where $p:E\rightarrow G$ is the canonical projection. 
Then $(\alpha_i, \beta_i),\ (\alpha,\beta) \in G^{(2)}$, and $(\alpha_i, \beta_i)$ converges to $(\alpha, \beta)$. 
Let $\varepsilon > 0$. 
By the assumption (i), there exist $f, g \in A_0$ such that 
\begin{eqnarray*}
\| f(\alpha) - a \| < \frac{\varepsilon }{\|b\|},\ \mbox{and}\  
\| g(\beta) - b \| < \frac{\varepsilon }{\|f(\alpha)\|}. 
\end{eqnarray*}
Since $A_0$ is closed under the convolution product, $f*g$ is a continuous section. 
Let $\alpha \in S_1 \subset T_1$ and $\beta \in S_2 \subset T_2$ be bisections of open neighbourhoods of $\alpha$ and $\beta$. 
Set $S=S_1S_2$ and $T=T_1T_2$. 
Take a continuous function $\varphi:G\rightarrow \C$ such that $\varphi \equiv 1$ on $S$ and $\varphi \equiv 0$ out of $T$. 
Then, $h(\gamma) = \varphi(\gamma) f*g(\gamma)$ is a continuous section of $E$ by the continuity of the scalar multiplication. 
We have $h(\alpha \beta) = f(\alpha)g(\beta)$, and $h(\alpha_i \beta_i)=f(\alpha_i)g(\beta_i)$ for large $i$. 

Since $f$ and $g$ are continuous, we have
\begin{eqnarray*}
\| f(\alpha_i) - a_i \| < \frac{\varepsilon }{\|b_i\|}\ \mbox{and}\  
\| g(\beta_i) - b_i \| < \frac{\varepsilon }{\|f(\alpha_i)\|} 
\end{eqnarray*}
for large $i$. 
Then, 
\begin{eqnarray*}
\| a_ib_i - h(\alpha_i \beta_i) \| = \| a_ib_i - f(\alpha_i) g(\beta_i) \| \leq 2\varepsilon,
\end{eqnarray*}
and also we have
\begin{eqnarray*}
\| ab - h(\alpha \beta) \| \leq 2\varepsilon .
\end{eqnarray*}
Hence, $a_ib_i$ converges to $ab$ by \cite[Chapter II, Proposition 13.12]{FD1}. 
\end{proof}

\section{Tensor Products of Fell Bundles with $C^*$-algebras}

\subsection{The case of locally compact spaces}
In this section, we summarize some known results of the tensor product of a $C^*$-algebraic bundle over a locally compact space with a fixed $C^*$-algebra. 

\begin{dfn}
Let $E$ be a $C^*$-algebraic bundle over a locally compact Hausdorff space $X$, and let $A$ be a $C^*$-algebra. 
Then $E \otimes A$ is said to be a {\it continuous bundle} if 
\begin{equation*}
x \in X \longmapsto \left\| \sum_{i=1}^n f_i(x)\otimes a_i \right\|_{\min}
\end{equation*}
is continuous for every $f_1,\cdots,f_n \in C_c(E)$ and $a_1,\cdots, a_n \in A$. 
\end{dfn}

If $E \otimes A$ is a continuous bundle, then one can define a $C^*$-algebraic bundle 
$E \otimes A = \coprod_{x \in X} E_x \otimes A$ over $X$ by specifying that every element of $C_c(E) \odot A$ defines a continuous section. 
Unfortunately, $E \otimes A$ is not always continuous. 
For example, if $A$ is a {\it non-exact} $C^*$-algebra, then one can construct a $C^*$-algebraic bundle $E$ over $\hat{\N}$ 
such that $E \otimes A$ is not continuous, where $\hat{\N}$ is the one-point compactification of $\N$ (cf.~\cite[Section 4]{KW}). 
The continuity of the tensor product is a hard problem and studied by Kirchberg and Wassermann in \cite{KW}. 
However, such a problem does not occur when every fiber is a nuclear $C^*$-algebra. 

\begin{prop}{\rm (Kirchberg-Wassermann \cite{KW})}\label{k-w} 
Let $E$ be a $C^*$-algebraic bundle over a locally compact Hausdorff space $X$ such that $E_x$ is nuclear for every $x \in X$. 
Then, $E \otimes A$ is a continuous bundle for any $C^*$-algebra $A$. 
\end{prop}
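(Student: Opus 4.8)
The plan is to reduce the statement to a pointwise estimate on the tensor-norm function and then exploit nuclearity of the fibers to control it. Fix sections $f_1,\dots,f_n \in C_c(E)$, elements $a_1,\dots,a_n \in A$, and a point $x_0 \in X$; I want to show $x \mapsto \bigl\| \sum_i f_i(x)\otimes a_i \bigr\|_{\min}$ is continuous at $x_0$. Upper semicontinuity is the easy half and holds for any $A$: the function $x \mapsto \bigl\| \sum_i f_i(x)\otimes a_i \bigr\|_{\min}$ is, for each finite-dimensional representation data, a sup over the continuous maps $x \mapsto \|\sum_i \pi_x(f_i(x))\otimes a_i\|$ coming from representations of $E_x$; more directly, since $E$ is a continuous Banach bundle, $x \mapsto \|\sum_i f_i(x)\otimes a_i\|_{E_x \odot A}$ in the max-norm or in any $C^*$-norm that is pointwise upper semicontinuous gives upper semicontinuity of $\|\cdot\|_{\min}$ at $x_0$ once we know the min-norm is dominated locally; I would first record that $x \mapsto \|F(x)\|$ is continuous for $F \in C_c(E)$ (axiom of Banach bundles) and that the map into the fiber $C^*$-algebra is a fiberwise algebraic map, so upper semicontinuity of the completed tensor norm follows from the standard fact that a quotient of a continuous field is upper semicontinuous.

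For lower semicontinuity at $x_0$, here is where nuclearity enters. Since $E_{x_0}$ is nuclear, $E_{x_0}\otimes A = E_{x_0}\motimes A$, so there is a state (or a c.c.p.\ map into matrices, via the CPAP) witnessing the value $\bigl\|\sum_i f_i(x_0)\otimes a_i\bigr\|_{\min}$ up to $\varepsilon$: concretely, pick a finite-dimensional $C^*$-algebra $M$ and c.c.p.\ maps $\varphi: E_{x_0}\to M$, $\psi: M\to E_{x_0}$ with $\psi\circ\varphi$ close to the identity on the relevant finite set $\{f_i(x_0)\}$. The key point is that $\varphi$ extends, or can be perturbed, to a locally defined family $\varphi_x: E_x \to M$ that is c.c.p.\ and varies continuously in $x$ near $x_0$; applying $\varphi_x\otimes \mathrm{id}_A$ gives $\bigl\|\sum_i \varphi_x(f_i(x))\otimes a_i\bigr\|_{M\otimes A} \le \bigl\|\sum_i f_i(x)\otimes a_i\bigr\|_{\min}$, and the left side is continuous in $x$ (finite-dimensional $M$, so the min-norm on $M\otimes A = M\motimes A$ is just the norm on a fixed matrix algebra over $A$), and at $x_0$ it is within $2\varepsilon$ of the target value. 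Letting $\varepsilon \to 0$ yields lower semicontinuity.

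The main obstacle is producing the continuously-varying c.c.p.\ approximants $\varphi_x: E_x \to M$. The c.c.p.\ map $\varphi$ on $E_{x_0}$ is built from the CPAP and is in particular a finite sum of maps of the form $a \mapsto (\omega_{jk}(a))_{jk}$ for states $\omega_{jk}$ on $E_{x_0}$; by Hahn–Banach / the structure of $C_0(E^{(0)})$-fields one extends each $\omega_{jk}$ to a continuous field of functionals on a neighborhood of $x_0$ (states on the bundle restricted near $x_0$), but matrix-positivity of the resulting $\varphi_x$ is not automatic and must be recovered, e.g.\ by compressing with a continuous approximate unit or by a small perturbation absorbed into $\varepsilon$. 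I would handle this by working with the sections themselves: $\{f_i(x)\}$ lies in a fixed finitely generated piece of the field, and on that piece continuity of the extended functionals plus a standard perturbation argument (rescaling to restore complete positivity at the cost of $O(\varepsilon)$) gives the needed family. Once that local construction is in place, the two semicontinuity halves combine to give continuity of the norm function, and then Proposition~\ref{construction} (applied to $E\otimes A$ with $A_0 = C_c(E)\odot A$) promotes this to an actual $C^*$-algebraic bundle, as remarked after the definition of continuous bundle; but the proposition as stated only asks for the continuity, so the proof ends there.
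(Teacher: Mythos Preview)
The paper does not supply a proof; it simply cites Kirchberg--Wassermann \cite[Section 2]{KW}. So the relevant comparison is with the argument in \cite{KW}, which runs as follows: for an arbitrary $C^*$-algebra $A$, the field $x\mapsto E_x\otimes_{\max}A$ is always upper semicontinuous (because $C_0(E)\otimes_{\max}A$ is a $C_0(X)$-algebra with fiber $E_x\otimes_{\max}A$, and fiber norms of $C_0(X)$-algebras are u.s.c.), while the field $x\mapsto E_x\otimes_{\min}A$ is always lower semicontinuous (because $C_0(E)\otimes_{\min}A$ embeds isometrically in $\prod_x(E_x\otimes_{\min}A)$, and a bump-function localization turns this into l.s.c.\ of the fiber norm). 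Nuclearity of each $E_x$ is used only to say $E_x\otimes_{\min}A=E_x\otimes_{\max}A$, so the min-norm function is simultaneously u.s.c.\ and l.s.c., hence continuous.

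Your plan contains pieces of this but assembles them incorrectly. You declare upper semicontinuity the ``easy half that holds for any $A$'' and then invoke nuclearity to get lower semicontinuity via a CPAP construction. This is backwards: lower semicontinuity of the \emph{minimal} tensor norm is the automatic half (no hypothesis on the fibers needed), and upper semicontinuity is where nuclearity enters, precisely through the identification $\min=\max$ together with the general u.s.c.\ of the maximal tensor field. Your paragraph on the easy half actually mixes both mechanisms (``sup over representations'' gives l.s.c., while ``quotient of a continuous field is u.s.c.'' is the max-tensor fact) without separating them, and without saying that nuclearity is what lets the max-side u.s.c.\ speak to the min norm.

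There is also a genuine gap in your hard half. You need a continuously varying family of c.c.p.\ maps $\varphi_x:E_x\to M$ agreeing with a prescribed $\varphi$ at $x_0$; you acknowledge that extending the matrix of states via Hahn--Banach does not preserve complete positivity, and the suggested ``rescaling to restore complete positivity at the cost of $O(\varepsilon)$'' is not a known general mechanism here. Even if this could be repaired, it would only reprove the l.s.c.\ direction, which needs no nuclearity at all. The economical fix is to abandon the CPAP route and argue as in \cite{KW}: prove l.s.c.\ of $\min$ and u.s.c.\ of $\max$ in general, then invoke nuclearity of the fibers once to conclude.
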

\begin{proof}
See \cite[Section 2]{KW}.
\end{proof}

See also \cite{KW} for the detail of the proof of the following propositions: 

\begin{prop} \label{sec}
If $E\otimes A$ is a continuous bundle, then $C_0(E\otimes A) = C_0(E) \otimes A$. 
\end{prop}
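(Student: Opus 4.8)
The plan is to identify $C_0(E \otimes A)$ with a completion of the algebraic section space $C_c(E) \odot A$ and then show that this completion agrees with the minimal tensor product $C_0(E) \otimes A$. First I would note that, since $E \otimes A$ is a continuous bundle by hypothesis, each element $\sum_i f_i \otimes a_i \in C_c(E) \odot A$ defines a continuous compactly-supported section of the bundle $\coprod_x E_x \otimes A$; conversely, using assumption (i)-type density (elements $f(x)$ for $f \in C_c(E)$ are dense in $E_x$, hence the elementary tensors $f(x) \otimes a$ span a dense subspace of $E_x \otimes A$) together with a standard partition-of-unity argument on $X$, the sections coming from $C_c(E) \odot A$ are uniformly dense in $C_0(E \otimes A)$. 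So $C_0(E \otimes A)$ is the completion of $C_c(E) \odot A$ in the norm $\|\sigma\|_\infty = \sup_{x} \|\sigma(x)\|_{\min}$, where $\sigma(x) = \sum_i f_i(x) \otimes a_i \in E_x \otimes A$.

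Next I would compare this with the norm on $C_0(E) \odot A$ coming from $C_0(E) \otimes A$. There is an obvious $*$-algebra map $C_c(E) \odot A \to C_0(E) \otimes A$ (sending $f \otimes a$ to the same symbol), and the point is that it is isometric for the sup-of-fiberwise-minimal norms. One inequality is cheap: for each fixed $x$, evaluation $C_0(E) \to E_x$ is a surjective $*$-homomorphism, so it is contractive after tensoring with $A$ (minimal tensor products are functorial for $*$-homomorphisms), giving $\|\sigma(x)\|_{\min} \le \|\sigma\|_{C_0(E)\otimes A}$ and hence $\|\sigma\|_\infty \le \|\sigma\|_{C_0(E)\otimes A}$. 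For the reverse inequality I would realize the minimal norm on $C_0(E) \otimes A$ via a faithful representation: take a faithful representation $\pi$ of $C_0(E)$ that decomposes as a direct sum (or direct integral) over the fiber representations $C_0(E) \to E_x \hookrightarrow B(\mathcal H_x)$ — such a representation is faithful because $C_0(E)$ embeds as sections and a section vanishing in every fiber is zero — and tensor with a faithful representation $\rho$ of $A$. Then $\pi \otimes \rho$ is faithful on $C_0(E) \otimes A$ (here one uses that $\pi$ is a direct sum of the fiber reps, so $\pi \otimes \rho = \bigoplus_x (\pi_x \otimes \rho)$, and each $\pi_x \otimes \rho$ computes $\|\cdot\|_{\min}$ on $E_x \otimes A$), which yields $\|\sigma\|_{C_0(E)\otimes A} = \sup_x \|(\pi_x \otimes \rho)(\sigma)\| = \sup_x \|\sigma(x)\|_{\min} = \|\sigma\|_\infty$.

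I expect the main obstacle to be the density statement — showing that $C_c(E) \odot A$ is uniformly dense in $C_0(E \otimes A)$, i.e., that an arbitrary continuous compactly-supported section of the tensor bundle can be uniformly approximated by finite sums $\sum_i f_i \otimes a_i$. This is where one genuinely uses the continuity hypothesis on $E \otimes A$ (it is exactly what makes the approximating sections continuous and the norm function usable) together with a local-to-global patching: near each point $x_0$ one approximates the given section in the fiber by an elementary tensor, extends it to a continuous section over a neighborhood, controls the error on that neighborhood by continuity of $x \mapsto \|\cdot\|_{\min}$, and then glues finitely many such local approximations with a partition of unity subordinate to a cover of the (compact) support. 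The rest — the two norm inequalities — is then routine functoriality of the minimal tensor product, modulo correctly setting up the direct-sum faithful representation of $C_0(E)$, which is standard for bundles of $C^*$-algebras.
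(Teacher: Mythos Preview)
The paper does not prove this proposition itself; it simply refers the reader to Kirchberg--Wassermann \cite{KW} for the details. Your proposal is correct and is essentially the standard argument one would expect to find there: density of $C_c(E)\odot A$ in $C_0(E\otimes A)$ via pointwise density plus a partition-of-unity gluing (this is exactly where the continuity hypothesis on $E\otimes A$ enters), and identification of the sup-of-fiberwise-minimal norms with the spatial norm on $C_0(E)\odot A$ via the faithful representation $\bigoplus_{x\in X}(\pi_x\otimes\rho)$. There is nothing further to compare.
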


\begin{prop} \label{k-w2}
If $E_x$ is nuclear for every $x \in X$, then $C_0(E)$ is nuclear. 
\end{prop}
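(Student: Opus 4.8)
The plan is to use the tensor-product characterization of nuclearity: $C_0(E)$ is nuclear if and only if, for every $C^*$-algebra $A$, the canonical surjection $\pi_A \colon C_0(E) \otimes_{\max} A \to C_0(E) \otimes A$ is injective (and hence an isomorphism). So I fix a $C^*$-algebra $A$ and aim to show $\pi_A$ is an isomorphism, arguing fiberwise over $X$.

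The key structural remark is that $C_0(E)$ is a $C_0(X)$-algebra: $C_0(X)$ acts on $C_0(E)$ by pointwise multiplication, centrally and non-degenerately, and evaluation at $x$ identifies the fiber $C_0(E)/\{f : f(x)=0\}$ with $E_x$ (using that a $C^*$-algebraic bundle has enough continuous sections, so $\mathrm{ev}_x$ is onto). Consequently $C_0(E) \otimes_{\max} A$ and $C_0(E) \otimes A$ are again $C_0(X)$-algebras, $\pi_A$ is $C_0(X)$-linear, and I would first identify their fibers over $x \in X$. Since the maximal tensor product is exact, $(C_0(E) \otimes_{\max} A)_x \cong E_x \otimes_{\max} A$; for the minimal tensor product, where exactness fails, the same identification $(C_0(E) \otimes A)_x \cong E_x \otimes A$ follows from a partition-of-unity argument exploiting the centrality of the $C_0(X)$-action (the fiber ideal at $x$ is the closure of the image of $(\ker \mathrm{ev}_x) \odot A$). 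Under these identifications, the map induced by $\pi_A$ on the fiber over $x$ is the canonical surjection $E_x \otimes_{\max} A \to E_x \otimes A$, which is an isomorphism because $E_x$ is nuclear by hypothesis.

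To conclude, I would invoke the fact that for a $C_0(X)$-algebra $B$ one has $\|b\| = \sup_{x \in X} \|b_x\|$: if $b \in \ker \pi_A$, then every $b_x$ lies in the kernel of the (injective) fiberwise map, so $b_x = 0$ for all $x$ and therefore $b = 0$. Since $\pi_A$ is already surjective, it is an isomorphism; as $A$ was arbitrary, $C_0(E)$ is nuclear. (Alternatively, one can combine Propositions \ref{k-w} and \ref{sec} to rewrite $C_0(E)\otimes A$ as $C_0(E\otimes A)$ and check that, because each $E_x$ is nuclear, the bundle $E\otimes_{\max}A$ is also continuous and in fact coincides with $E\otimes A$ by the uniqueness of the bundle topology; this recovers the same conclusion but is essentially a repackaging of the fiberwise argument.)

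The only point requiring real care is the identification of $(C_0(E)\otimes A)_x$ with $E_x\otimes A$: one cannot lean on exactness of $\otimes$, so a direct $C_0(X)$-algebra computation with partitions of unity is needed. Everything else — the fiber structure of $C_0(E)$, the exactness computation for $\otimes_{\max}$, the supremum formula for the norm of a $C_0(X)$-algebra, and nuclearity of $E_x$ forcing $\pi_{A,x}$ to be an isomorphism — is assembly of standard facts.
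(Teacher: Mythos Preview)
Your parenthetical alternative \emph{is} the paper's proof: the chain $C_0(E)\otimes_{\max}A = C_0(E\otimes_{\max}A) = C_0(E\otimes A) = C_0(E)\otimes A$ uses Propositions~\ref{k-w} and~\ref{sec} together with the fiberwise equality $E_x\otimes_{\max}A = E_x\otimes A$ coming from nuclearity of $E_x$.

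Your main route, however, has a gap at exactly the point you flag as delicate. The partition-of-unity argument does show that the fiber ideal of the $C_0(X)$-algebra $C_0(E)\otimes A$ at $x$ is $I_x\otimes A$ (the closure of $(\ker\mathrm{ev}_x)\odot A$), so the fiber is $(C_0(E)\otimes A)/(I_x\otimes A)$. But identifying this quotient with $E_x\otimes A$ is precisely the exactness of $0\to I_x\otimes A\to C_0(E)\otimes A\to E_x\otimes A\to 0$, and that can fail for non-exact $A$: if it held for every $x$, the fiber-norm function $x\mapsto\|\sum f_i(x)\otimes a_i\|$ would be upper semicontinuous (automatic for any $C_0(X)$-algebra) and hence $E\otimes A$ would always be continuous, contradicting the Kirchberg--Wassermann examples mentioned just before Proposition~\ref{k-w}. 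Centrality of the $C_0(X)$-action is not enough here; what actually secures the identification is Proposition~\ref{k-w} (continuity of $E\otimes A$ from nuclearity of the fibers) followed by Proposition~\ref{sec}, and that collapses back to the paper's argument.

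Your fiberwise strategy is nonetheless salvageable without that identification: the induced map $(\pi_A)_x\colon E_x\otimes_{\max}A\to (C_0(E)\otimes A)_x$ is surjective, and there is always a further quotient $q_x\colon (C_0(E)\otimes A)_x\twoheadrightarrow E_x\otimes A$. The composite $q_x\circ(\pi_A)_x$ is the canonical map $E_x\otimes_{\max}A\to E_x\otimes A$, injective by nuclearity of $E_x$; hence $(\pi_A)_x$ is already injective, and your supremum-of-fiber-norms conclusion goes through.
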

\begin{proof}
If $A$ is a $C^*$-algebra, then we have
\begin{equation*}
C_0(E)\otimes_{\max}A = C_0(E\otimes_{\max}A)=C_0(E\otimes A)=C_0(E)\otimes A
\end{equation*}
since every fiber of $E$ is nuclear. 
Hence $C_0(E)$ is nuclear. 
\end{proof}

\subsection{The general case}

Let $E$ be a Fell bundle over a groupoid $G$, and $A$ be a $C^*$-algebra. 
Assume that $E^{(0)}\otimes A$ is a continuous bundle.  
We will define the tensor product $E\otimes A$. 

Let $\gamma \in G$ and $x=s(\gamma), y=r(\gamma)$. 
Then, $E_{\gamma}$ is a Hilbert $E_y$-$E_x$-bimodule. 
We regard $A$ as an imprimitive $A$-$A$-bimodule in the obvious way. 
Let $(E\otimes A)_{\gamma} = E_{\gamma}\otimes A$, where $E_{\gamma}\otimes A$ is the exterior tensor product as Hilbert $C^*$-bimodules (cf.~\cite[Chapter 3]{RW}). 
Then, $(E\otimes A)_{\gamma}$ is a Hilbert $(E_y\otimes A)$-$(E_x\otimes A)$-bimodule endowed with the inner products 
\begin{eqnarray*}
&&\langle e_1\otimes a_1, e_2\otimes a_2\rangle_r = e_1^*e_2 \otimes a_1^*a_2,\\
&&_l\langle e_1\otimes a_1, e_2\otimes a_2\rangle = e_1e_2^* \otimes a_1a_2^*, 
\end{eqnarray*}
for $e_1,e_2 \in E_{\gamma}$ and $a_1,a_2 \in A$. 

Next, we define a multiplication and an involution on $E \otimes A = \coprod_{\gamma \in G} (E\otimes A)_\gamma$. 
Let $E \odot A = \coprod_{\gamma \in G} (E\odot A)_\gamma$ and define a multiplication and an involution on $E \odot A$ by 
\begin{eqnarray*}
&&(e_1\otimes a_1)(e_2\otimes a_2) = e_1e_2 \otimes a_1a_2,\\
&&(e \otimes a)^* = e^*\otimes a^* 
\end{eqnarray*}
for $(e_1,e_2) \in E^{(2)}, e \in E$ and $a_1,a_2,a \in A$. 
For $s,t \in (E\odot A)_{\gamma}$, we have 
\begin{eqnarray*}
\langle s,t\rangle_r = s^*t\ \mbox{and}\ _l\langle s,t\rangle = st^*. 
\end{eqnarray*} 

\begin{lem}{\rm (cf.~\cite[Proposition 3.8]{A})}\label{ineq} 
If $s,t \in E\odot A$ are composable, then $\| st\| \leq \|s\| \|t\|$. 
\end{lem}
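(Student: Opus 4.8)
The plan is to reduce the estimate to one pair of fibres and then run the standard square-root argument from Hilbert $C^*$-module theory. Since each element of $E\odot A$ lies in a single fibre, write $s\in(E\odot A)_{\gamma_1}$ and $t\in(E\odot A)_{\gamma_2}$ with $(\gamma_1,\gamma_2)\in G^{(2)}$; then $st\in(E\odot A)_{\gamma_1\gamma_2}$ and $s^*s\in(E\odot A)_{s(\gamma_1)}=E_{r(\gamma_2)}\odot A$. Put $B=E_{r(\gamma_2)}\otimes A$ and $C=E_{s(\gamma_2)}\otimes A$, and regard $V=E_{\gamma_2}\otimes A$ as the right Hilbert $C$-module that it is, with its left $B$-action. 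I would first record the compatibility identity
\[
\langle st,st\rangle_r=\langle t,(s^*s)\cdot t\rangle_r\qquad\text{in }C.
\]
On simple tensors $s=e\otimes a$ and $t=f\otimes b$ both sides equal $f^*e^*ef\otimes b^*a^*ab$; since each side is linear in its right-hand entry, conjugate-linear in its left-hand entry, and $\langle s_1,s_2\rangle_r=s_1^*s_2$ on $(E\odot A)_{\gamma_1}$, the identity follows in general. (Conceptually it is the Fell-bundle identity $(e_1f_1)^*(e_2f_2)=f_1^*(e_1^*e_2)f_2$ tensored with $A$.)

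Next, $s^*s$ is a positive element of $B$ with $\|s^*s\|=\|s\|^2$, and, by bimodule axiom (ii), its left action on $V$ is a self-adjoint operator $T$ in the $C^*$-algebra $\mathcal{L}(V)$ of adjointable operators on $V$; being the image of $s^*s$ under the $*$-homomorphism $B\to\mathcal{L}(V)$ it satisfies $0\le T\le\|s\|^2\,\mathrm{id}_V$. Hence $\|s\|^2\,\mathrm{id}_V-T=R^2$ for the self-adjoint element $R=(\|s\|^2\,\mathrm{id}_V-T)^{1/2}\in\mathcal{L}(V)$, so that
\[
\|s\|^2\langle t,t\rangle_r-\langle st,st\rangle_r=\langle t,(\|s\|^2\,\mathrm{id}_V-T)t\rangle_r=\langle Rt,Rt\rangle_r\ge 0
\]
in $C$. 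Taking norms and using that $0\le X\le Y$ forces $\|X\|\le\|Y\|$ yields $\|st\|^2=\|\langle st,st\rangle_r\|\le\|s\|^2\|\langle t,t\rangle_r\|=\|s\|^2\|t\|^2$, as required.

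The one point that genuinely uses the Fell-bundle axioms --- and the part I expect to be the only obstacle --- is verifying the compatibility identity of the first step together with the fact that the left $B$-action on $V$ is really by adjointable operators, so that $T\in\mathcal{L}(V)$ and the square-root manipulation is legitimate; both unwind directly from the axioms and the definitions of the inner products on $E_\gamma\otimes A$. Alternatively one could package everything through a Hilbert-bimodule isomorphism $(E_{\gamma_1}\otimes A)\otimes_{E_{r(\gamma_2)}\otimes A}(E_{\gamma_2}\otimes A)\cong E_{\gamma_1\gamma_2}\otimes A$ and invoke the norm estimate for the interior tensor product of Hilbert modules, but the direct computation above is the shorter route.
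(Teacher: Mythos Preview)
Your proof is correct and follows essentially the same approach as the paper: both hinge on the identity $\langle st,st\rangle_r=\langle t,(s^*s)t\rangle_r$ and the fact that $s^*s$ acts adjointably on the right Hilbert $C$-module $E_{\gamma_2}\otimes A$. The only cosmetic difference is that the paper finishes in one line via Cauchy--Schwarz and contractivity of the left action, $\|\langle t,(s^*s)t\rangle_r\|\le\|t\|\,\|(s^*s)t\|\le\|t\|^2\|s\|^2$, whereas you unwind the same estimate through the square-root $(\|s\|^2\mathrm{id}-T)^{1/2}$; both are standard and equivalent.
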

\begin{proof}
We have 
$\| st \|^2 = \|t^*s^*st\| = \| \langle t, (s^*s)t \rangle_r \| \leq \|t\| \|(s^*s)t\| \leq \|t\|^2 \|s\|^2$.
\end{proof}

By Lemma \ref{ineq}, the multiplication map $(E_{\alpha}\odot A) \times (E_{\beta}\odot A) \rightarrow E_{\alpha \beta}\odot A$ 
for $(\alpha, \beta) \in G^{(2)}$ is extended continuously to $(E_{\alpha}\otimes A) \times (E_{\beta}\otimes A) \rightarrow E_{\alpha \beta}\otimes A$. 
Therefore, the multiplication is extended to whole $E\otimes A$. 
Similarly, the involution is extended to whole $E\otimes A$ since the involution is isometric. 
By the construction, $E\otimes A$ is an untopologized Fell bundle over $G$ in Proposition \ref{construction}. 

\begin{lem} \label{conti}
For $f_1,\cdots f_n \in C_c(E)$ and $a_1,\cdots a_n \in A$, 
\begin{equation*}
\gamma \in G \longmapsto \left\| \sum_{i=1}^n f_i(\gamma)\otimes a_i \right\|
\end{equation*}
is a continuous function. 
\end{lem}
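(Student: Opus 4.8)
The plan is to express the norm appearing in the statement, which is a Hilbert-bimodule norm in the fibre $(E\otimes A)_\gamma$, as the $C^*$-norm of an element of the fibre of $E^{(0)}\otimes A$ over $s(\gamma)$, and then to deduce continuity from the standing assumption that $E^{(0)}\otimes A$ is a continuous bundle, after transporting the relevant sections from $G$ down to the unit space along a bisection.

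Concretely, first I would write, for $\gamma\in G$ and $\xi=\sum_{i=1}^n f_i(\gamma)\otimes a_i\in(E\otimes A)_\gamma$, that $\|\xi\|^2=\|\langle\xi,\xi\rangle_r\|=\bigl\|\sum_{i,j}f_i(\gamma)^*f_j(\gamma)\otimes a_i^*a_j\bigr\|$, where the last norm is the $C^*$-norm of $E_{s(\gamma)}\otimes A$, i.e.\ of the fibre of $E^{(0)}\otimes A$ over $s(\gamma)$ (here the coefficient algebra of an exterior tensor product is the minimal tensor product, which matches the norm used in the definition of a continuous bundle). Putting $b_{ij}=a_i^*a_j$, I would then check that each $k_{ij}\colon\gamma\mapsto f_i(\gamma)^*f_j(\gamma)$ is a continuous section of $E$ taking values in $E^{(0)}$: the map $\gamma\mapsto(f_i(\gamma)^*,f_j(\gamma))$ is continuous into $E^{(2)}$ by continuity of $f_i,f_j$ and of the involution, composing with the continuous multiplication $E^{(2)}\to E$ preserves continuity, and $p(f_i(\gamma)^*f_j(\gamma))=\gamma^{-1}\gamma=s(\gamma)$. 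It then suffices to show that $\gamma\mapsto\bigl\|\sum_{i,j}k_{ij}(\gamma)\otimes b_{ij}\bigr\|_{\min}$ is continuous.

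For this I would fix $\gamma_0\in G$ and, using that $G$ is \etalec choose a bisection $S\ni\gamma_0$, so that $s|_S$ is a homeomorphism onto an open set $U\subset G^{(0)}$. Then $\widetilde k_{ij}:=k_{ij}\circ(s|_S)^{-1}$ is a continuous section of $E^{(0)}$ over $U$; choosing $\varphi\in C_c(U)$ with $\varphi\equiv 1$ on a neighbourhood $V$ of $s(\gamma_0)$ and extending by $0$, the sections $g_{ij}:=\varphi\,\widetilde k_{ij}$ belong to $C_c(E^{(0)})$ (cutting a local continuous section to a global compactly supported one is standard for Banach bundles; cf.~\cite{FD1}) and agree with $\widetilde k_{ij}$ on $V$. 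Since $E^{(0)}\otimes A$ is a continuous bundle, $x\mapsto\bigl\|\sum_{i,j}g_{ij}(x)\otimes b_{ij}\bigr\|_{\min}$ is continuous on $G^{(0)}$; restricting to $V$ and precomposing with the homeomorphism $s|_S$ shows $\gamma\mapsto\bigl\|\sum_{i,j}k_{ij}(\gamma)\otimes b_{ij}\bigr\|_{\min}$ is continuous on the open neighbourhood $(s|_S)^{-1}(V)$ of $\gamma_0$. As $\gamma_0$ is arbitrary and the square root is continuous on $[0,\infty)$, the lemma follows.

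The step I expect to be the main obstacle is precisely this transfer: the continuity hypothesis on $E^{(0)}\otimes A$ speaks only of global sections over the unit space, whereas the sections $k_{ij}$ live over $G$ and genuinely depend on $\gamma$ and not merely on $s(\gamma)$, so one cannot simply precompose a global statement with the source map. Working on a bisection, where $s$ is injective, and producing honest global compactly supported sections via a bump function is what legitimises the reduction; a secondary point to be careful about is the identification, for $x$ in the unit space, of the Hilbert-bimodule norm on $E_x\otimes A$ with the minimal $C^*$-tensor norm, so that it really coincides with the norm in the definition of a continuous bundle.
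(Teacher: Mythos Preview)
Your proposal is correct and follows essentially the same route as the paper: rewrite the bimodule norm via the right inner product as $\bigl\|\sum_{i,j}f_i(\gamma)^*f_j(\gamma)\otimes a_i^*a_j\bigr\|^{1/2}$, restrict to a bisection so that $s$ is a homeomorphism, and invoke the standing hypothesis that $E^{(0)}\otimes A$ is a continuous bundle. The only difference is cosmetic: the paper simply notes that $x\mapsto f_i(Sx)^*f_j(Sx)$ is a continuous section of $E^{(0)}$ and applies the continuity hypothesis directly, whereas you are more explicit about cutting down by a bump function to land in $C_c(E^{(0)})$; this extra care is justified but not a different idea.
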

\begin{proof}
Let $f_1,\cdots f_n \in C_c(E)$ and $a_1,\cdots a_n \in A$. 
It suffices to show that the above function is continuous on every bisection of $G$. 
Let $S \subset G$ be a bisection. 
Then for $f,g \in C_c(E)$,
\begin{equation*}
x \in s(S) \longmapsto f(Sx)^*g(Sx) \in E_x
\end{equation*}
is a continuous section of $E^{(0)}$. 
Therefore, 
\begin{equation*}
x \in s(S) \longmapsto \left\| \sum_{i,j=1}^n f_i(Sx)^*f_j(Sx)\otimes a_i^*a_j \right\|
\end{equation*}
is continuous since $E^{(0)}\otimes A$ is a continuous bundle by the assumption. 
Now it is easy to see that 
\begin{equation*}
\gamma \in S \longmapsto \left\| \sum_{i=1}^n f_i(\gamma)\otimes a_i \right\| = \left\| \sum_{i,j=1}^n f_i(\gamma)^*f_j(\gamma)\otimes a_i^*a_j \right\|^{1/2}
\end{equation*}
is continuous. 
\end{proof}

\begin{prop}
The bundle $E\otimes A$ has a unique Fell bundle structure such that
\begin{equation*}
\gamma \longmapsto \sum_{i=1}^n f_i(\gamma) \otimes a_i \in (E\otimes A)_{\gamma}
\end{equation*}
is a continuous section for every $\sum_{i=1}^n f_i \otimes a_i \in C_c(E) \odot A$.
\end{prop}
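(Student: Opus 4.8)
The plan is to apply Proposition~\ref{construction} to the untopologized Fell bundle $E\otimes A$, taking for $A_0$ the $*$-algebra $C_c(E)\odot A$, regarded as a set of sections of $E\otimes A$ via the assignment $\sum_i f_i\otimes a_i\longmapsto\bigl(\gamma\mapsto\sum_i f_i(\gamma)\otimes a_i\bigr)$. Before invoking the proposition I would record three preliminary facts. First, this assignment is well defined and injective: choosing a representative $\sum_i f_i\otimes a_i$ with the $a_i$ linearly independent, the resulting section vanishes identically precisely when every $f_i$ does, by linear independence in the fibers $E_\gamma\odot A$. Second, $C_c(E)\odot A$ is closed under the convolution and involution of $C_c(E\otimes A)$, since a direct computation gives $(f\otimes a)*(g\otimes b)=(f*g)\otimes ab$ and $(f\otimes a)^*=f^*\otimes a^*$; hence it is a $*$-algebra, and since each such section vanishes outside $\bigcup_i\operatorname{supp}f_i$, a compact subset of $G$, it is a $*$-algebra of compactly supported sections. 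Third, as already observed in the text, $E\otimes A$ equipped with the multiplication and involution extended from $E\odot A$ is an untopologized Fell bundle in the sense of Proposition~\ref{construction}.

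Next I would verify the two hypotheses of Proposition~\ref{construction} for $A_0=C_c(E)\odot A$. Hypothesis~(ii), namely the continuity of $\gamma\mapsto\bigl\|\sum_i f_i(\gamma)\otimes a_i\bigr\|$, is exactly Lemma~\ref{conti}. For hypothesis~(i), fix $\gamma\in G$; since $\{f(\gamma)\mid f\in C_c(E)\}$ is dense in $E_\gamma$ (a standard property of the continuous sections of a Banach bundle, obtained by multiplying a continuous section by a compactly supported cutoff), the linear span of $\{f(\gamma)\otimes a\mid f\in C_c(E),\,a\in A\}$ is dense in $E_\gamma\odot A$, which is in turn dense in the Hilbert-bimodule exterior tensor product $(E\otimes A)_\gamma=E_\gamma\otimes A$; hence $\{s(\gamma)\mid s\in A_0\}$ is dense in $(E\otimes A)_\gamma$.

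Granting both hypotheses, Proposition~\ref{construction} yields a unique topology on $E\otimes A$ turning it into a Fell bundle for which every section coming from $C_c(E)\odot A$ is continuous, which is precisely the assertion. I do not anticipate a genuine obstacle here: the only analytic input, the continuity of the fiberwise norms, has already been isolated as Lemma~\ref{conti}, and what remains is the routine bookkeeping above — identifying $C_c(E)\odot A$ with a $*$-algebra of compactly supported sections and checking the fiberwise density — together with the citation of Proposition~\ref{construction}.
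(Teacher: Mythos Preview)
Your proposal is correct and follows essentially the same approach as the paper: identify $C_c(E)\odot A$ with a $*$-algebra of compactly supported sections of the untopologized bundle $E\otimes A$, invoke Lemma~\ref{conti} for the norm-continuity hypothesis, and apply Proposition~\ref{construction}. The paper's proof is terser---it does not spell out the well-definedness of the section map or the fiberwise density hypothesis~(i)---but your added detail is accurate and fills in exactly what the paper leaves implicit.
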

\begin{proof}
It can be easily seen that the usual product of $C_c(E)\odot A$ coincides with the convolution product as sections, 
and the usual involution with the one as sections. 
Therefore, by using Lemma \ref{conti}, we can apply Proposition \ref{construction} to the untopologized Fell bundle $E\otimes A$. 
\end{proof}

Next, we investigate the reduced $C^*$-algebra of $E\otimes A$. 
From the above construction, we have $(E\otimes A)^{(0)} = E^{(0)}\otimes A$. 
Since $C_0(E^{(0)}\otimes A) = C_0(E^{(0)}) \otimes A$ by Proposition \ref{sec}, 
$C_0(E^{(0)}) \otimes A$ is a $C^*$-subalgebra of $C^*_r(E\otimes A)$ with a canonical faithful conditional expectation
$\tilde{P}:C^*_r(E\otimes A) \rightarrow C_0(E^{(0)}) \otimes A$. 

We use an elementary lemma about conditional expectations. 

\begin{lem} \label{expec}
Let $A,B$ be $C^*$-algebras and $C$ be a common $C^*$-subalgebra of $A$ and $B$. 
Let $A_0 \subset A$, $B_0 \subset B$ be dense $*$-subalgebras and $\pi:A_0\rightarrow B_0$ be a surjective $*$-homomorphism. 
Assume that there exist a conditional expectation $P_A:A\rightarrow C$ and a faithful conditional expectation $P_B:B\rightarrow C$ such that the diagram 
\begin{eqnarray*}
\xymatrix{
A_0 \ar[dr]_{P_A} \ar[rr]^{\pi} & & B_0 \ar[dl]^{P_B} \\
 & C & }
\end{eqnarray*}
commutes. Then $\pi$ extends to a surjective $*$-homomorphism $\tilde{\pi}:A\rightarrow B$. 

Moreover, if $P_A$ is faithful, then $\tilde{\pi}$ is injective. 
\end{lem}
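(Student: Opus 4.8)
The plan is to construct $\tilde{\pi}$ via a norm estimate that lets us pass from the dense subalgebra $A_0$ to its completion $A$. First I would observe that for any $a \in A_0$, the element $a^*a$ lies in $A_0$ (since $A_0$ is a $*$-subalgebra) and $P_A(a^*a)$, $P_B(\pi(a^*a)) = P_B(\pi(a)^*\pi(a))$ are equal by the commuting diagram. Since $P_B$ is faithful and $C$ is sitting commonly inside both $A$ and $B$, the standard estimate $\|\pi(a)\|^2 = \|\pi(a)^*\pi(a)\| \le \|P_B(\pi(a)^*\pi(a))\| \cdot (\text{const})$ — more precisely, faithfulness of a conditional expectation $P_B$ on a $C^*$-algebra $B$ gives that $b \mapsto \|P_B(b^*b)\|^{1/2}$ is a norm equivalent to... no, not equivalent, but one does have $\|b\|^2 \ge \|P_B(b^*b)\| \ge c\|b\|^2$ fails in general; what is true and what I would use is that $\|\pi(a)\| \le \|a\|$, which follows because $P_A(a^*a) = P_B(\pi(a)^*\pi(a))$ forces $\pi$ to be norm-decreasing: indeed $\|\pi(a)\|^2 = \|\pi(a)^*\pi(a)\|$ and since $P_B$ is faithful, $\|\pi(a)^*\pi(a)\| = \sup_{\text{states } \varphi \text{ with } \varphi = \varphi \circ P_B} \varphi(\pi(a)^*\pi(a))$, and such states pull back through $P_B = P_A$ on the image to states on $A$ dominated by $\|a^*a\| = \|a\|^2$.

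More concretely, here is the cleaner route I would actually carry out. Regard $C \subset A$ and $C \subset B$. For $a \in A_0$, consider the positive element $P_A(a^*a) \in C \subset A$. For any state $\psi$ on $B$ of the form $\psi_0 \circ P_B$ with $\psi_0$ a state on $C$, we have $\psi(\pi(a)^*\pi(a)) = \psi_0(P_B(\pi(a)^*\pi(a))) = \psi_0(P_A(a^*a)) \le \|P_A(a^*a)\| \le \|a^*a\| = \|a\|^2$, using that $P_A$ is contractive. Because $P_B$ is faithful, the states of the form $\psi_0 \circ P_B$ separate positive elements in the following sense: $\|b\| = \sup\{\psi(b) : \psi = \psi_0 \circ P_B\}$ for $b \ge 0$ — this is exactly where faithfulness of $P_B$ enters, and it is the crux of the argument. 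Hence $\|\pi(a)\|^2 = \|\pi(a)^*\pi(a)\| \le \|a\|^2$, so $\pi$ is contractive and extends to a $*$-homomorphism $\tilde{\pi} : A \to B$. Surjectivity of $\tilde{\pi}$ is immediate since its range is a closed subalgebra containing the dense subalgebra $B_0$.

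For the moreover clause, assume $P_A$ is also faithful. By symmetry the same estimate with the roles reversed — using faithfulness of $P_A$ instead of $P_B$ — would give $\|a\| \le \|\pi(a)\|$ for $a \in A_0$, hence $\tilde{\pi}$ is isometric on a dense subalgebra and therefore isometric, in particular injective. Alternatively, one checks directly: if $\tilde{\pi}(a) = 0$ for $a \in A$, then for the positive element $a^*a$ we get $P_B(\tilde{\pi}(a^*a)) = 0$; by continuity and the commuting diagram extended to $A$ (note $P_B \circ \tilde{\pi}$ and $P_A$ agree on the dense set $A_0$, hence on $A$), this yields $P_A(a^*a) = 0$, and faithfulness of $P_A$ forces $a^*a = 0$, i.e.\ $a = 0$.

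The main obstacle is establishing the key inequality $\|b\| = \sup\{(\psi_0 \circ P_B)(b) : \psi_0 \text{ a state on } C\}$ for positive $b \in B$ from faithfulness of $P_B$; this is the genuinely $C^*$-algebraic input and the step where one must be careful. Everything else — contractivity of conditional expectations, the $C^*$-identity, extending by density, and the symmetric argument for injectivity — is routine. I would isolate that supremum formula as the heart of the proof and verify it using the GNS construction associated to $P_B$: the faithfulness of $P_B$ guarantees the associated right Hilbert $C$-module completion of $B$ is faithful, so $\|b\|$ is recovered as the operator norm on that module, which in turn is the stated supremum over states pulled back from $C$.
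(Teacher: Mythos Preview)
There is a genuine gap at the step you flag as ``the crux''. The claimed identity
\[
\|b\| \;=\; \sup\bigl\{\,\psi_0(P_B(b)) : \psi_0 \text{ a state on } C\,\bigr\}
\qquad (b \in B,\ b \geq 0)
\]
is false: the right-hand side equals $\|P_B(b)\|$, and a faithful conditional expectation is in general not norm-preserving on positives. For instance, take $B = M_2(\mathbb{C})$, $C$ the diagonal subalgebra, $P_B$ the diagonal compression, and $b = \bigl(\begin{smallmatrix}1&1\\1&1\end{smallmatrix}\bigr)$: then $\|b\| = 2$ but $\|P_B(b)\| = 1$. Faithfulness of $P_B$ tells you only that the states $\psi_0 \circ P_B$ \emph{separate} positive elements, not that they \emph{norm} them; your estimate therefore yields only $\|P_B(\pi(a)^*\pi(a))\| \leq \|a\|^2$, which is not enough.

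Your instinct about the Hilbert $C$-module associated with $P_B$ is the right one, but the operator norm on that module is not a supremum over states pulled back through $P_B$; it is a supremum over \emph{module vectors}. This is exactly the formula the paper uses: faithfulness of $P_B$ gives
\[
\|b\|^2 \;=\; \sup\bigl\{\,\|P_B(c^*b^*bc)\| : c \in B_0,\ P_B(c^*c) \leq 1\,\bigr\},
\]
and because $\pi:A_0\to B_0$ is surjective and intertwines $P_A$ with $P_B$, the right-hand side for $b = \pi(a)$ becomes $\sup\{\|P_A(c^*a^*ac)\| : c \in A_0,\ P_A(c^*c)\leq 1\} \leq \|a\|^2$. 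Once this correct formula replaces yours, the rest of your outline (extension by density, surjectivity, and your second argument for injectivity via $P_A(a^*a)=P_B(\tilde\pi(a)^*\tilde\pi(a))=0$) goes through; note however that your first ``by symmetry'' argument for injectivity does not make sense as stated, since $\pi$ need not be invertible on $A_0$.
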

\begin{proof}
Since $P_B$ is faithful, we have
\begin{equation*}
\|b\|^2 = \sup \left\{ \|P_B(c^*b^*bc)\| \ : \ c \in B_0, P_B(c^*c) \leq 1 \right\} 
\end{equation*}
for every $b \in B$. Hence, for $a \in A_0$, 
\begin{eqnarray*}
\| \pi(a) \|^2 &=& \sup \left\{ \|P_B(\pi(c)^*\pi(a)^*\pi(a)\pi(c))\| \ : \ c \in A_0, P_B(\pi(c)^*\pi(c)) \leq 1 \right\} \\
&=& \sup \left\{ \|P_A(c^*a^*ac)\| \ : \ c \in A_0, P_A(c^*c) \leq 1 \right\} \\
&\leq& \|a\|^2.
\end{eqnarray*}
Therefore, $\pi$ extends to a surjection $\tilde{\pi}:A\rightarrow B$. 
If $P_A$ is also faithful, then the equality holds in the last inequality. Hence $\tilde{\pi}$ is isometric in this case. 
\end{proof}

\begin{prop} \label{min}
The $C^*$-algebra $C^*_r(E \otimes A)$ is canonically isomorphic to $C^*_r(E) \otimes A$. 
\end{prop}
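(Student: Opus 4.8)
The plan is to apply Lemma~\ref{expec} with $A = C^*_r(E\otimes A)$, $B = C^*_r(E)\otimes A$, the common subalgebra $C = C_0(E^{(0)})\otimes A$, and the dense $*$-subalgebras $A_0 = C_c(E)\odot A$ and $B_0 = C_c(E)\odot A$ (the latter sitting inside $C^*_r(E)\otimes A$ via $f\otimes a \mapsto f\otimes a$). First I would make precise that $C_c(E)\odot A$ is a common dense $*$-subalgebra of both algebras: density in $C^*_r(E\otimes A)$ holds because $C_c(E\otimes A)$ is dense there and $C_c(E)\odot A$ is dense in $C_c(E\otimes A)$ in the inductive limit topology (each element of $C_c(E\otimes A)$ is approximated fiberwise by the assumption~(i)-type density of $E\odot A$ in $E\otimes A$ together with a partition of unity argument), while density in $C^*_r(E)\otimes A$ is standard since $C_c(E)$ is dense in $C^*_r(E)$. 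I would take $\pi$ to be the identity map on $C_c(E)\odot A$, which is visibly a $*$-isomorphism of the two copies because the multiplication and involution on $C_c(E\otimes A)$ restricted to $C_c(E)\odot A$, namely $(f\otimes a)*(g\otimes b) = (f*g)\otimes(ab)$ and $(f\otimes a)^* = f^*\otimes a^*$, are exactly the operations of the algebraic tensor product $C_c(E)\odot A$.

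Next I would identify the two conditional expectations. On the $B$-side, take $P_B = P\otimes\mathrm{id}_A$, where $P\colon C^*_r(E)\to C_0(E^{(0)})$ is the canonical faithful conditional expectation; since $P$ is faithful and $A$ is arbitrary, $P\otimes\mathrm{id}_A$ is a faithful conditional expectation onto $C_0(E^{(0)})\otimes A$ (faithfulness of a slice-type map of this form is standard for the minimal tensor product). On the $A$-side, take $P_A = \tilde{P}$, the faithful conditional expectation $C^*_r(E\otimes A)\to C_0(E^{(0)})\otimes A$ constructed just before the statement. Both are restrictions of the fiberwise-restriction map $C_c(E\otimes A)\to C_c((E\otimes A)^{(0)}) = C_c(E^{(0)}\otimes A)$, so on $C_c(E)\odot A$ they agree with $\sum f_i\otimes a_i \mapsto \sum (f_i|_{G^{(0)}})\otimes a_i$; hence the triangle in Lemma~\ref{expec} commutes with $\pi = \mathrm{id}$.

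With these ingredients Lemma~\ref{expec} applies directly: since $P_A = \tilde P$ is faithful and $P_B = P\otimes\mathrm{id}_A$ is faithful, the conclusion "moreover" clause gives that $\pi = \mathrm{id}_{C_c(E)\odot A}$ extends to an injective surjective $*$-homomorphism $\tilde{\pi}\colon C^*_r(E\otimes A)\to C^*_r(E)\otimes A$, i.e.\ a $C^*$-isomorphism, and it is canonical because it is the identity on the common dense $*$-subalgebra. The main obstacle I expect is not the lemma application itself but the two verifications that feed it: (a) that $\tilde P$ is genuinely faithful on $C^*_r(E\otimes A)$—this is asserted in the text just above the statement, so I may invoke it—and (b) that $P\otimes\mathrm{id}_A$ is faithful on $C^*_r(E)\otimes A$, which I would justify by the standard fact that if $P\colon C\to D$ is a faithful conditional expectation between $C^*$-algebras then $P\otimes\mathrm{id}_A\colon C\otimes A\to D\otimes A$ is faithful (seen, e.g., by tensoring a faithful GNS-type representation witnessing faithfulness of $P$ with a faithful representation of $A$). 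A secondary point requiring a little care is the density of $C_c(E)\odot A$ in $C^*_r(E\otimes A)$, for which I would note that elements of $C_c(E\otimes A)$ are uniformly approximated on their compact support by sections of the form $\sum f_i\otimes a_i$ using a bisection cover and Lemma~\ref{conti}, whence convergence in $C^*_r$-norm follows from the standard estimate of the reduced norm by sup-norm over a finite cover of bisections.
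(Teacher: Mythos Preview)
Your proposal is correct and follows essentially the same route as the paper: apply Lemma~\ref{expec} to the identity map on $C_c(E)\odot A$, using the faithful expectations $\tilde P$ and $P\otimes\mathrm{id}_A$ onto $C_0(E^{(0)})\otimes A$. The paper is terser---it simply cites \cite[Appendix]{Av} for the faithfulness of $P\otimes\mathrm{id}_A$ and does not spell out the density of $C_c(E)\odot A$ in $C^*_r(E\otimes A)$---but your additional verifications are the right ones and do not change the argument.
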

\begin{proof}
Let $P:C^*_r(E) \rightarrow C_0(E^{(0)})$ be the canonical faithful conditional expectation. 
Then $P\otimes \mathrm{id} : C^*_r(E) \otimes A \rightarrow C_0(E^{(0)}) \otimes A$ is a faithful conditional expectation (cf.~\cite[Appendix]{Av}). 
One can apply Lemma \ref{expec} to the diagram
\begin{eqnarray*}
\xymatrix{
C_c(E) \odot A \ar[dr]_{\tilde{P}} \ar[rr]^{\mathrm{id}} & & C_c(E) \odot A \ar[dl]^{P\otimes \mathrm{id}} \\
 & C_0(E^{(0)}) \otimes A &. }
\end{eqnarray*}
Hence the extension of the identity map gives the isomorphism of $C^*_r(E \otimes A)$ and $C^*_r(E) \otimes A$. 
\end{proof}

\section{The Main Theorem}
\begin{thm} \label{nuclear}
Let E be a Fell bundle over an \etale locally compact Hausdorff groupoid G. 
If G is amenable, then the following conditions are equivalent:
\begin{itemize}
\setlength{\itemsep}{-11pt}
\item[{\rm (i)}] The $C^*$-algebra $C^*_r(E)$ is nuclear.\\
\item[{\rm (ii)}] The fiber $E_x$ is nuclear for every $x \in G^{(0)}$.\\
\item[{\rm (iii)}] The $C^*$-algebra $C_0(E^{(0)})$ is nuclear.
\end{itemize}
\end{thm}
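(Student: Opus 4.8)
The plan is to dispatch the three implications $\mathrm{(i)}\Rightarrow\mathrm{(iii)}$, $\mathrm{(iii)}\Rightarrow\mathrm{(ii)}$ and $\mathrm{(ii)}\Rightarrow\mathrm{(iii)}$ first---none of these requires amenability---and then to concentrate on $\mathrm{(ii)}\Rightarrow\mathrm{(i)}$, which is the only step that uses the hypothesis that $G$ is amenable. For $\mathrm{(i)}\Rightarrow\mathrm{(iii)}$: the restriction map extends to a faithful conditional expectation $C^*_r(E)\to C_0(E^{(0)})$, so $C_0(E^{(0)})$ is the range of a completely positive contractive idempotent on the nuclear algebra $C^*_r(E)$, and nuclearity passes to such ranges (see \cite{BO}). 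The implication $\mathrm{(ii)}\Rightarrow\mathrm{(iii)}$ is Proposition \ref{k-w2} applied to the $C^*$-algebraic bundle $E^{(0)}$ over the locally compact Hausdorff space $G^{(0)}$. For $\mathrm{(iii)}\Rightarrow\mathrm{(ii)}$: for each $x\in G^{(0)}$ the evaluation map $C_0(E^{(0)})\to E_x$ is a surjective $*$-homomorphism, so $E_x$ is a quotient of a nuclear $C^*$-algebra, hence nuclear.

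It remains to prove $\mathrm{(ii)}\Rightarrow\mathrm{(i)}$. Assume every $E_x$ is nuclear; then $C_0(E^{(0)})$ is nuclear by the previous paragraph. Fix an arbitrary $C^*$-algebra $A$. To conclude that $C^*_r(E)$ is nuclear it is enough to show that the canonical surjection $C^*_r(E)\motimes A\to C^*_r(E)\otimes A$ is an isomorphism. Since every $E_x$ is nuclear, Proposition \ref{k-w} shows that $E^{(0)}\otimes A$ is a continuous bundle, so the tensor product Fell bundle $E\otimes A$ over $G$ is defined, and Proposition \ref{min} identifies the target with $C^*_r(E\otimes A)$. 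Hence it suffices to prove $C^*_r(E)\motimes A\cong C^*_r(E\otimes A)$.

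This identification is the crux of the proof, and I would factor it through the full $C^*$-algebras. Since $G$ is amenable, the full and reduced $C^*$-algebras of any Fell bundle over $G$ coincide---this is the Fell-bundle form of the groupoid fact underlying Theorem \ref{classical}(ii)---so $C^*_r(E)\motimes A = C^*(E)\motimes A$ and $C^*_r(E\otimes A) = C^*(E\otimes A)$. It then remains to prove the universal-property identity $C^*(E)\motimes A\cong C^*(E\otimes A)$, i.e.\ that the maximal tensor product behaves as expected. One domination of the two $C^*$-norms on the dense $*$-subalgebra $C_c(E)\odot A\subset C_c(E\otimes A)$ is automatic, since restricting a $*$-representation of $C^*(E\otimes A)$ yields a commuting pair of representations of $C^*(E)$ and of $A$; for the other one checks that the norm induced on $C_c(E)\odot A$ by $C^*(E)\motimes A$ is continuous for the inductive-limit topology of $C_c(E\otimes A)$---this is exactly where the nuclearity of $C_0(E^{(0)})$ re-enters, since on a section supported in a bisection the relevant computation takes place inside $C_0(E^{(0)})\otimes A = C_0(E^{(0)})\motimes A$---so it extends to a $C^*$-seminorm on $C_c(E\otimes A)$, which is dominated by the universal norm. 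Chaining everything, $C^*_r(E)\motimes A\cong C^*(E)\motimes A\cong C^*(E\otimes A)\cong C^*_r(E\otimes A)\cong C^*_r(E)\otimes A$, which is what we need; as $A$ was arbitrary, $C^*_r(E)$ is nuclear.

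The main obstacle is thus this last chain: making the maximal tensor product identity $C^*(E)\motimes A\cong C^*(E\otimes A)$ precise (which amounts to developing just enough of the maximal tensor product of a Fell bundle with a fixed $C^*$-algebra), together with the invocation that amenability of $G$ forces the full and reduced $C^*$-algebras to coincide for the Fell bundles $E$ and $E\otimes A$. As a variant---closer to the route of \cite{BO}---one can instead prove $\mathrm{(ii)}\Rightarrow\mathrm{(i)}$ without tensor products at all, by verifying the completely positive approximation property for $C^*_r(E)$ directly: a F\o lner-type net of compactly supported continuous positive-type functions on the amenable groupoid $G$ produces finite-propagation completely positive contractive maps on $C^*_r(E)$ converging pointwise to the identity, which one composes with the completely positive approximation property of the nuclear algebra $C_0(E^{(0)})$.
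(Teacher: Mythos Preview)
Your treatment of the easy implications $\mathrm{(i)}\Rightarrow\mathrm{(iii)}$, $\mathrm{(iii)}\Rightarrow\mathrm{(ii)}$ and $\mathrm{(ii)}\Leftrightarrow\mathrm{(iii)}$ matches the paper's exactly. For the substantive direction $\mathrm{(ii)}\Rightarrow\mathrm{(i)}$, however, you take a genuinely different route. The paper never passes through the full $C^*$-algebra: it works entirely at the reduced level and shows that the quotient $Q:C^*_r(E)\motimes A\to C^*_r(E)\otimes A$ is injective directly. The mechanism is Lemma~\ref{ccp}: each compactly supported positive definite $h$ on $G$ yields a c.c.p.\ multiplier $m_h$ on $C^*_r(E)$; the image of $m_h\motimes\mathrm{id}$ lands in the space $C(K,E\otimes A)$ of sections supported in a fixed compact $K$, and Lemma~\ref{bdd} (whose part~(i) is exactly your bisection computation inside $C_0(E^{(0)})\otimes A=C_0(E^{(0)})\motimes A$) shows $Q$ is injective there. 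Amenability then supplies a net $h_i\to 1$, so $m_{h_i}\motimes\mathrm{id}\to\mathrm{id}$ and $\ker Q=0$.

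Your route---$C^*_r(E)\motimes A=C^*(E)\motimes A\cong C^*(E\otimes A)=C^*_r(E\otimes A)$---is correct and arguably cleaner once the ingredients are in hand, but it front-loads the difficulty into the black box ``amenability of $G$ implies $C^*(F)=C^*_r(F)$ for every Fell bundle $F$ over $G$''. That statement is true, but its standard proof uses precisely the positive-definite-function multiplier maps that the paper isolates in Lemma~\ref{ccp}; so what you are invoking is not a simple corollary of Theorem~\ref{classical}(ii) but essentially the same analytic content repackaged. The trade-off: your argument separates the amenability input (weak containment) from the nuclearity input (the universal-property identity $C^*(E)\motimes A\cong C^*(E\otimes A)$), which is conceptually pleasant and reusable; the paper's argument is self-contained and avoids developing the full $C^*$-algebra and the maximal tensor product of Fell bundles at all. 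Your closing variant via the completely positive approximation property is yet a third approach, closer in spirit to the paper's but still distinct---the paper never builds finite-dimensional factorizations, it only needs that $m_{h_i}\motimes\mathrm{id}\to\mathrm{id}$.
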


Since there exists a canonical faithful conditional expectation from $C^*_r(E)$ onto $C_0(E^{(0)})$, (i) implies (iii). 
Since $E_x$ is a quotient of $C_0(E^{(0)})$, (iii) implies (ii). 
The condition (ii) implies (iii) by Proposition \ref{k-w2}. 
Therefore, the implication (ii) $\Rightarrow $ (i) is the only non-trivial part of this theorem. 

The proof we give here is based on the proof of \cite[Theorem 5.6.18]{BO}. 
The first lemma is the general property concerning the relation between positive definite functions and 
contractive completely positive (c.c.p.) maps. 
We think this lemma is useful for other purposes. 

\begin{lem} \label{ccp}
Let $E$ be a Fell bundle over a groupoid $G$, 
and let $h$ be a compactly supported continuous positive definite function on $G$ with $\sup_{\gamma \in G} |h(\gamma)| \leq 1$. 
Then the multiplier map 
\begin{equation*}
m_h : C_c(E) \rightarrow C_c(E),\ f \mapsto hf
\end{equation*}
extends to a c.c.p.~map on $C^*_r(E)$. 
\end{lem}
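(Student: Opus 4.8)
The plan is to realize the multiplier map $m_h$ via a ``compression'' by a suitable element living in a corner of a larger $C^*$-algebra, following the standard trick used for reduced crossed products and reduced group $C^*$-algebras. First I would recall the GNS-type picture of $C^*_r(E)$: the reduced norm is implemented on a Hilbert module $\mathcal{L}^2(E)$ obtained by completing $C_c(E)$ in the $C_0(E^{(0)})$-valued inner product $\langle f,g\rangle = P(f^* * g)$, where $P$ is the canonical faithful conditional expectation; $C^*_r(E)$ acts faithfully on $\mathcal{L}^2(E)$ by left convolution. Since a positive definite function $h$ is (by definition, after passing through a Naimark-type dilation) of the form $h(\gamma) = \langle \xi_{r(\gamma)}, \gamma \cdot \xi_{s(\gamma)}\rangle$ for an appropriate vector-valued object $\xi$, one expects $m_h$ to be the compression $T \mapsto V^* T V$ for a suitable adjointable (or bounded) operator $V$ built from $h$; the complete positivity and contractivity are then automatic from $\|V\| \le 1$, which is exactly where $\sup|h| \le 1$ and positive definiteness enter.

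Concretely, I would first reduce to the case where $h$ is ``elementary,'' i.e. of the form $h(\gamma) = \langle \eta(r(\gamma)), \gamma\cdot \eta(s(\gamma))\rangle$ for a section $\eta$; a general compactly supported continuous positive definite function is a limit of sums of such, and a pointwise-bounded limit of c.c.p.\ maps defined on the common dense subalgebra $C_c(E)$ that agree with $m_{h}$ in the limit will still be c.c.p.\ (one has to be a little careful and argue that the maps $m_{h_n}$ converge to $m_h$ in the point-norm topology on $C^*_r(E)$, using that convolution against a fixed $C_c(E)$ element is continuous and that the $h_n$ can be taken with uniformly bounded sup-norm and supports inside a fixed compact set). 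For an elementary $h$ attached to a section $\eta$, define $V : \mathcal{L}^2(E) \to \mathcal{L}^2(E)$ on $C_c(E)$ by $(Vf)(\gamma) = \eta(r(\gamma)) f(\gamma)$ — more precisely the pointwise module action of $\eta(r(\gamma)) \in E_{r(\gamma)}$ — or a mild variant thereof; a direct computation with the inner product $P(f^* * g)$ and the convolution formula shows $\langle Vf, (\lambda(g))Vf'\rangle = \langle f, m_h(g) f'\rangle$ for $g \in C_c(E)$, i.e. $V^* \lambda(g) V = \lambda(m_h(g))$, so $m_h$ extends to $g \mapsto V^* \lambda(g) V$, which is completely positive, and contractive because $V^* V \le 1$ (equivalently $h$ being positive definite and bounded by $1$ forces $\langle Vf, Vf\rangle \le \langle f,f\rangle$).

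The key computational steps in order are: (1) identify the correct Hilbert-module representation of $C^*_r(E)$ and check the left action is faithful with the conditional expectation $P$ as vector state data; (2) set up the dilation of a positive definite $h$ to produce the section $\eta$ (or a Hilbert-module-valued vector) implementing it, including the bound $\|\eta\|$-type estimate coming from $\sup|h|\le 1$; (3) define $V$, verify adjointability and $\|V\|\le 1$; (4) verify the intertwining identity $V^*\lambda(g)V = \lambda(m_h(g))$ by a convolution computation on $C_c(E)$; (5) handle the reduction from general $h$ to elementary $h$ by an approximation/limit argument in the point-norm topology.

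The main obstacle I anticipate is step (2) together with step (4) in the genuine groupoid/Fell-bundle generality: in the group-$C^*$-algebra proof in Brown--Ozawa the positive definite function dilates cleanly to a cyclic representation with a distinguished vector, but over a groupoid the ``vector'' is fibered over $G^{(0)}$ and the algebra of coefficients is $C_0(E^{(0)})$ rather than $\mathbb{C}$, so one must be careful that $\eta$ can be chosen to be a genuine (bounded, measurable or continuous enough) section, that the pointwise module multiplications $\eta(r(\gamma)) \cdot (\text{--})$ are compatible with the Hilbert-module structure, and that the resulting $V$ is honestly adjointable on $\mathcal{L}^2(E)$ rather than merely bounded. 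Getting the bookkeeping of left-versus-right module actions and the $E_{r(\gamma)}$-$E_{s(\gamma)}$-bimodule structure of the fibers correct in the intertwining identity is the delicate part; once that identity holds on the dense subalgebra $C_c(E)$, extending to $C^*_r(E)$ and concluding complete positivity and contractivity is formal.
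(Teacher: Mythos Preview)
Your high-level strategy---realize $m_h$ as a compression $V^*(\,\cdot\,)V$ with $\|V\|\le 1$---is exactly right, and is what the paper does. But the implementation you outline has a genuine gap at the point you yourself flag, and the paper sidesteps it by a cleaner construction.

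The gap is in your step (2)/(3). If you run a Naimark/GNS-type dilation of the positive definite function $h$, the cyclic ``vector'' $\eta$ you get is a section of an auxiliary Hilbert bundle $\mathcal{H}\to G^{(0)}$ carrying a unitary representation $U$ of $G$; it is \emph{not} a section of $E^{(0)}$. Your formula $(Vf)(\gamma)=\eta(r(\gamma))f(\gamma)$, with $\eta(r(\gamma))\in E_{r(\gamma)}$ acting by the module structure, is therefore not well-typed; what one actually needs is a map into a tensor product $\mathcal{H}\otimes_{G^{(0)}}\mathcal{L}^2(E)$ and the representation on that target must involve $U$, not just $1\otimes\lambda$. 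Making this precise (continuity/measurability of the bundle, adjointability of $V$, the intertwining identity with $U$ in play) is exactly the bookkeeping you were worried about, and your approximation step (5) does not help with it.

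The paper avoids all of this: it never dilates $h$. It works fiberwise with the faithful family $\pi_x:C^*_r(E)\to B(V_x)$, $V_x=\bigoplus_{\gamma\in G_x}E_\gamma$, and uses only the \emph{regular} representation $\lambda_x$ of $C^*_r(G)$ on $\ell^2(G_x)$. Positive definiteness of $h$ is used solely to say $\lambda_x(h)\ge 0$, so $\lambda_x(h)^{1/2}$ exists; the ``diagonal'' isometry $\iota:V_x\to\ell^2(G_x)\otimes V_x$, $\xi_\gamma\mapsto\delta_\gamma\otimes\xi_\gamma$, then gives $T_x=(\lambda_x(h)^{1/2}\otimes 1)\circ\iota$, and a one-line convolution check yields $T_x^*(1\otimes\pi_x(f))T_x=\pi_x(m_h(f))$. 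This is a Fell-absorption-style trick: the auxiliary space is $\ell^2(G_x)$ with the regular representation, not a bespoke dilation of $h$. No approximation of $h$ by ``elementary'' functions is needed, and no continuity or adjointability issues arise because everything is built from $\lambda_x(h)^{1/2}$ and the explicit isometry $\iota$. If you rewrite your $V$ as $T_x$ above, your steps (3) and (4) go through verbatim and steps (2) and (5) disappear.
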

\begin{proof}
For $x \in G^{(0)}$, let $V_x = \bigoplus_{\gamma \in G_x} E_{\gamma}$ (the direct sum is taken as right Hilbert $E_x$-modules) 
and define a representation $\pi_x: C^*_r(E) \rightarrow B(V_x)$ by 
\begin{equation*}
\pi_x(f) \left( \sum_{\gamma \in G_x} \xi_{\gamma} \right) = \sum_{\gamma \in G_x} \left( \sum_{\beta \in G_x} f(\gamma \beta^{-1})\xi_{\beta} \right)
\end{equation*}
for $f \in C_c(E)$ and $\sum_{\gamma} \xi_{\gamma} \in V_x$. 
Then $\{\pi_x\}_{x \in G^{(0)}}$ is a faithful family of representations of $C^*_r(E)$ (cf.~\cite{K}). 
Similarly, $\lambda_x: C^*_r(G) \rightarrow B(\ell^2(G_x))$ is defined by 
\begin{equation*}
\lambda_x(f)\xi(\gamma) = \sum_{\beta \in G_x} f(\gamma \beta^{-1})\xi(\beta)
\end{equation*} 
for $f \in C_c(G)$ and $\xi \in \ell^2(G_x)$. 

Define $\iota : V_x \rightarrow \ell^2(G_x) \otimes V_x$ by  
\begin{equation*} 
\sum_{\gamma \in G_x}\xi_{\gamma} \mapsto \sum_{\gamma \in G_x} \delta_{\gamma} \otimes \xi_{\gamma}. 
\end{equation*} 
We can see that $\iota$ is adjointable and isometric. The adjoint map $\iota^*$ is given by 
\begin{equation*}
\iota^* \left( \sum_{\alpha \in G_x}  \delta_{\alpha} \otimes \left( \sum_{\beta \in G_x} \xi_{\alpha,\beta} \right) \right) 
= \sum_{\alpha \in G_x} \xi_{\alpha,\alpha} 
\end{equation*}
for $\xi_{\alpha,\beta} \in E_{\beta}$. 
Define $T_x:V_x \rightarrow \ell^2(G_x) \otimes V_x$ by 
\begin{equation*}
T_x\left( \sum_{\gamma \in G_x}\xi_{\gamma} \right) = \sum_{\gamma \in G_x} \lambda_x(h)^{1/2} \delta_{\gamma} \otimes \xi_{\gamma}. 
\end{equation*}
Then $T_x$ is adjointable and contractive since $T_x = (\lambda_x(h)^{1/2}\otimes 1) \circ \iota$. 
By simple calculation, we have $T_x^*(1\otimes \pi_x(f))T_x = \pi_x(m_h(f))$ for every $f \in C_c(E)$. 
Therefore, if one identifies $C^*_r(E)$ with $(\bigoplus_x \pi_x)(C^*_r(E))$, the restriction of the c.c.p.~map  
\begin{equation*}
\Phi : \prod_x B(V_x) \rightarrow \prod_x B(V_x),\ \sum_x a_x \mapsto \sum_x T_x^*(1\otimes a_x)T_x
\end{equation*}
on $C^*_r(E)$ gives the extension of $m_h$. 
\end{proof}

In fact, the condition that $h$ is compactly supported is not necessary, but we need only the case of compactly-supported. 

Let $E$ be a Fell bundle over an amenable groupoid $G$ such that $E_x$ is nuclear for every $x \in G^{(0)}$, and let $A$ be an arbitrary $C^*$-algebra. 
Then $E^{(0)} \otimes A$ is continuous by Proposition \ref{k-w}. 

\begin{lem} \label{bdd}
Let $K$ be a compact subset of $G$. Then the following hold: 
\begin{itemize}
\setlength{\itemsep}{-2pt}
\item[{\rm (i)}] 
There exists a positive constant $C_K>0$ such that for $f_1,\cdots,f_n \in C_c(E)$ supported in $K$ 
and $a_1,\cdots,a_n \in A$, we have 
\begin{equation*}
\left\| \sum_{i=1}^n f_i\otimes a_i \right\|_{\max} \leq C_K \sup_{\gamma \in K} \left\| \sum_{i=1}^n f_i(\gamma) \otimes a_i \right\|_{E_{\gamma}\otimes A}.
\end{equation*}
\item[{\rm (ii)}] 
For $f_1,\cdots,f_n \in C_c(E)$ and $a_1,\cdots,a_n \in A$, we have
\begin{equation*}
\sup_{\gamma \in K} \left\| \sum_{i=1}^n f_i(\gamma) \otimes a_i \right\|_{E_{\gamma}\otimes A} \leq \left\| \sum_{i=1}^n f_i\otimes a_i \right\|_{\min}.
\end{equation*}
\end{itemize}
Here, $\| \cdot \|_{\max}$ and $\| \cdot \|_{\min}$ are the maximal and minimal norms of $C^*_r(E)\odot A$. 
\end{lem}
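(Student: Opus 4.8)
The plan is to establish the two estimates (i) and (ii) separately: (ii) will follow from a pointwise bound valid for an arbitrary Fell bundle, while (i) will be reduced, by a partition of unity over bisections, to the case of sections supported in a single open bisection, where the maximal tensor norm becomes computable.

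For (ii), I would first invoke Proposition~\ref{min} to rewrite $\|\sum_i f_i\otimes a_i\|_{\min}$ as the norm of $\sum_i f_i\otimes a_i$, viewed as a section in $C_c(E\otimes A)\subseteq C^*_r(E\otimes A)$. It then suffices to show that for any Fell bundle $F$ over $G$ and any $f\in C_c(F)$ one has $\sup_{\gamma\in G}\|f(\gamma)\|_{F_\gamma}\le\|f\|_{C^*_r(F)}$. For a unit $x$, the element $(f^{*}*f)(x)=\sum_{\gamma\in G_x}f(\gamma)^{*}f(\gamma)$ is a finite sum of positive elements of the $C^*$-algebra $F_x$, so its norm dominates $\|f(\gamma)^{*}f(\gamma)\|_{F_x}=\|f(\gamma)\|_{F_\gamma}^2$ for each $\gamma\in G_x$; applying the contractive conditional expectation $C^*_r(F)\to C_0(F^{(0)})$ to $f^{*}*f$ and taking the supremum over $x$ yields $\|f\|_{C^*_r(F)}^2=\|f^{*}*f\|_{C^*_r(F)}\ge\sup_{\gamma}\|f(\gamma)\|_{F_\gamma}^2$. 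Specializing $F=E\otimes A$ and restricting the supremum to $K$ gives (ii).

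For (i), since $G$ is \'etale I would cover the compact set $K$ by finitely many open bisections $S_1,\dots,S_m$ and choose $\varphi_k\in C_c(G)$ with $0\le\varphi_k\le 1$, $\operatorname{supp}\varphi_k\subseteq S_k$, and $\sum_k\varphi_k\equiv 1$ on $K$. For $f_i$ supported in $K$ one has $\sum_i f_i\otimes a_i=\sum_k(\sum_i(\varphi_k f_i)\otimes a_i)$ with each $\varphi_k f_i$ supported in $S_k$, so by the triangle inequality in $\|\cdot\|_{\max}$ (together with $|\varphi_k|\le 1$ and the fact that $\varphi_kf_i$ vanishes off $K$) it is enough to prove, for $g=\sum_i g_i\otimes a_i$ with every $g_i$ supported in a single bisection $S$, the bound $\|g\|_{\max}\le\sup_{\gamma\in S}\|\sum_i g_i(\gamma)\otimes a_i\|_{E_\gamma\otimes A}$; summing over $k$ then gives (i) with $C_K=m$. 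For the bisection bound I would compute $\|g\|_{\max}^2=\|g^{*}g\|_{\max}$, where $g^{*}g=\sum_{i,j}(g_i^{*}*g_j)\otimes a_i^{*}a_j$; each $g_i^{*}*g_j$ is supported in $s(S)\subseteq G^{(0)}$ with $(g_i^{*}*g_j)(x)=g_i(Sx)^{*}g_j(Sx)$, where $Sx$ denotes the unique element of $S$ with source $x$, so $g^{*}g\in C_0(E^{(0)})\odot A$. Since $C_0(E^{(0)})$ is nuclear by Proposition~\ref{k-w2}, we have $C_0(E^{(0)})\motimes A=C_0(E^{(0)})\otimes A$, and this $C^*$-algebra embeds isometrically in $C^*_r(E)\motimes A$: composing $C_0(E^{(0)})\motimes A\to C^*_r(E)\motimes A$ with the canonical surjection $C^*_r(E)\motimes A\to C^*_r(E)\otimes A$ recovers the injection $C_0(E^{(0)})\otimes A\hookrightarrow C^*_r(E)\otimes A$ induced by $C_0(E^{(0)})\subseteq C^*_r(E)$. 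Hence $\|g^{*}g\|_{\max}$ equals the norm of $g^{*}g$ in $C_0(E^{(0)})\otimes A=C_0(E^{(0)}\otimes A)$ (Proposition~\ref{sec}), namely $\sup_{x\in s(S)}\|(g^{*}g)(x)\|_{E_x\otimes A}$; and since $(g^{*}g)(x)=\langle\sum_i g_i(Sx)\otimes a_i,\sum_j g_j(Sx)\otimes a_j\rangle_r$ in the Hilbert module $E_{Sx}\otimes A$, this equals $\sup_{\gamma\in S}\|\sum_i g_i(\gamma)\otimes a_i\|_{E_\gamma\otimes A}^2$, proving the bisection bound.

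The crux — and the only point where the standing hypothesis that all fibers $E_x$ are nuclear is genuinely used — is the control of $\|g^{*}g\|_{\max}$ in the bisection case: one must descend from $C^*_r(E)\motimes A$ to $C_0(E^{(0)})\motimes A$ and then identify the latter with $C_0(E^{(0)}\otimes A)$, which rests on nuclearity of $C_0(E^{(0)})$ and on continuity of the bundle $E^{(0)}\otimes A$ (Propositions~\ref{k-w},~\ref{sec},~\ref{k-w2}). The remaining ingredients — the partition of unity over bisections and the elementary positivity estimate for $(f^{*}*f)(x)$ — are routine.
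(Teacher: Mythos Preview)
Your proof is correct and follows essentially the same route as the paper: for (ii) you invoke Proposition~\ref{min} (adding a brief justification via the conditional expectation that the paper leaves implicit), and for (i) you reduce by a partition of unity to a single bisection and then use nuclearity of $C_0(E^{(0)})$ to identify $\|g^*g\|_{\max}$ with a supremum over fibers, exactly as the paper does. Your extra care in justifying that the max norm on $C^*_r(E)\odot A$ restricts to the min norm on $C_0(E^{(0)})\odot A$ is a welcome elaboration of a step the paper states without proof.
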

\begin{proof}
The statement (ii) follows from Proposition \ref{min}. We prove (i). 
By the usual argument of the partition of unity, we may assume that $K$ is contained in some bisection. 
Then $f_i^*f_j$ is in $C_0(E^{(0)})$ for every $i,j$. 
Since $C_0(E^{(0)})$ is nuclear, the restriction of the maximal tensor norm to $C_0(E^{(0)}) \odot A$ coincides with the minimal tensor norm, and
$C_0(E^{(0)})\otimes A = C_0(E^{(0)}\otimes A)$. 
Therefore, we have
\begin{eqnarray*}
\left\| \sum_{i=1}^n f_i\otimes a_i \right\|_{\max}^2 
= \left\| \sum_{i,j} f_i^*f_j\otimes a_i^*a_j \right\|_{\max} 
= \sup_{x \in G^{(0)}} \left\| \sum_{i,j} f_i^*f_j(x)\otimes a_i^*a_j \right\| \\
= \sup_{\gamma \in G} \left\| \sum_{i,j} f_i(\gamma)^*f_j(\gamma) \otimes a_i^*a_j \right\| 
= \sup_{\gamma \in G} \left\| \sum_i f_i(\gamma) \otimes a_i \right\|_{E_{\gamma}\otimes A}^2. 
\end{eqnarray*}
\end{proof}

These lemmata are all we need, and now we give the proof of Theorem \ref{nuclear}. 
In order to prove the theorem, it suffices to show that the quotient map $Q:C^*_r(E) \motimes A \rightarrow C^*_r(E)\otimes A$ is injective. 

For every compact subset $K$ of $G$, We denote by $C(K,E)$ the set of continuous sections of $E$ supported in $K$. 
Then $C(K,E)$ is complete with respect to the sup-norm 
and $C(K,E) \odot A$ is a dense subspace of $C(K,E\otimes A)$ with the sup-norm topology. 

Let $h$ be a continuous positive definite function on $G$ supported in a compact subset $K$ of $G$, 
and assume $\sup_{\gamma \in G} |h(\gamma)| \leq 1$. 
Then by Lemma \ref{bdd}, the inclusion map $C(K,E) \odot A \rightarrow C^*_r(E) \odot A$ extends to bounded injective maps 
$C(K, E\otimes A) \rightarrow C^*_r(E)\motimes A$ and $C(K, E\otimes A) \rightarrow C^*_r(E)\otimes A$ with closed images. 
Thus we can regard $C(K, E\otimes A)$ as a closed subspace of $C^*_r(E)\motimes A$ and $C^*_r(E)\otimes A$. 
Then the quotient map $Q$ is injective on $C(K, E\otimes A)$. 

By Lemma \ref{ccp}, we have the following commutative diagram: 
\begin{eqnarray*}
\xymatrix{
C^*_r(E) \motimes A \ar[dd]_{Q} \ar[rr]^{m_h \motimes \mathrm{id}} & & C^*_r(E) \motimes A \ar[dd]^{Q} & \\
\\
C^*_r(E) \otimes A \ar[rr]^{m_h \otimes \mathrm{id}} & & C^*_r(E) \otimes A & .}
\end{eqnarray*}
Let $a \in C^*_r(E)\motimes A$ with $Q(a)=0$. Then we have $Q\circ (m_h \motimes \mathrm{id})(a) = (m_h \otimes \mathrm{id})\circ Q(a) = 0$. 
Since the image of $m_h \motimes \mathrm{id}$ is contained in $C(K, E\otimes A)$ and $Q$ is injective on $C(K, E\otimes A)$, 
we have $m_h \motimes \mathrm{id}(a) = 0$. 

Let $h_i \in C_c(G)$ be a net of compactly supported continuous positive definite functions which converges to 1 uniformly on compact subsets of $G$,  
and satisfies $\sup_{\gamma \in G} |h_i(\gamma)| \leq 1$ for every $i$. 
Then $m_{h_i} \motimes \mathrm{id}$ converges to $\mathrm{id}_{C^*_r(E)\motimes A}$ in the point-norm topology. 
Hence, for $a \in \ker Q$, we have 
\begin{equation*}
a = \lim_i m_{h_i} \motimes \mathrm{id}(a) = 0
\end{equation*}
by the above argument. This proves that $Q$ is injective. 
\qed

\begin{rmk}
Even when considering a non-amenable groupoid $G$, the $C^*$-algebra $C^*_r(E)$ often happens to be nuclear for some Fell bundle $E$ over $G$. 
For example, if a (non-amenable) discrete group $\Gamma$ acts amenably on a unital nuclear $C^*$-algebra $A$, 
then the reduced crossed product $A \rtimes_r \Gamma$ is nuclear (see \cite[Section 4]{BO}). 

\end{rmk}

\section{Examples and Applications}

\subsection{Crossed Products}
\begin{dfn}(cf.~\cite{K}) 
Let $q:A\rightarrow G^{(0)}$ be a $C^*$-algebraic bundle over $G^{(0)}$. 
Define $G*A = \{ (\gamma, a)\ |\ s(\gamma)=q(a) \}$. 
An {\it action} of $G$ on $A$ is a continuous map $\alpha:G*A \rightarrow A$, $\alpha(\gamma,a)$ is denoted by $\alpha_{\gamma}(a)$, 
satisfying the following conditions: 
\begin{itemize}
\setlength{\itemsep}{-11pt}
\item[{\rm (i)}] $q(\alpha_{\gamma}(a)) = r(\gamma)$. \\
\item[{\rm (ii)}] $\alpha_{\gamma}:A_{s(\gamma)}\rightarrow A_{r(\gamma)}$ is an isomorphism. \\ 
\item[{\rm (iii)}] $\alpha_x(a)=a$ for $x \in G^{(0)}$ and $a \in A_x$. \\
\item[{\rm (iv)}] $\alpha_{\gamma_1 \gamma_2}(a) = \alpha_{\gamma_1}(\alpha_{\gamma_2}(a))$. 
\end{itemize}
\end{dfn}

If there is an action $\alpha$ of a groupoid $G$ on a $C^*$-algebraic bundle $A$ over $G^{(0)}$, 
we can define a Fell bundle structure on $G*A$. 
The topology of $G*A$ is the relative topology of the usual product topology, 
and the projection $p:G*A\rightarrow G$ is the projection onto the first coordinate. 
The norm on $G*A$ is defined by $\|(\gamma,a)\| = \|a\|$. 
The multiplication and the involution is defined by 
\begin{eqnarray*}
&&(\gamma_1, a_1)(\gamma_2, a_2) = (\gamma_1 \gamma_2, \alpha_{\gamma_2^{-1}}(a_1) a_2),\\
&&(\gamma,a)^*=(\gamma^{-1},\alpha_{\gamma}(a^*)). 
\end{eqnarray*}
$G*A$ is called a {\it semidirect-product bundle}. 

A semidirect-product bundle is a kind of generalization of usual crossed products by groups to groupoids. 
If $G=\Gamma$ is a discrete group, then $\Gamma^{(0)}=\{e\}$ and a $C^*$-algebraic bundle over $\Gamma^{(0)}$ is just a $C^*$-algebra. 
In this case, $C_c(\Gamma*A)$ is identified with finitely-supported $A$-valued functions on $\Gamma$, 
and the multiplication and the involution on $C_c(\Gamma*A)$ coincide with those of crossed products. 
Therefore, we have $C_r(\Gamma*A)\cong A\rtimes_r \Gamma$ since both sides are completions of $C_c(\Gamma*A)$ 
characterized by the canonical faithful expectation onto $A$. 
In the case of crossed products by groups, Theorem \ref{nuclear} reduces to (iii) in Theorem \ref{classical}. 

The semidirect product bundle is certainly a proper generalization of crossed products, 
but it may seem less natural that actions on bundles are used instead of actions of $C^*$-algebras. 
Indeed, we can define crossed products by groupoids from actions on cartain class of $C^*$-algebras---
{\it $C_0(X)$-algebras} (where $X$ is the unit space of the groupoid). 
Actually, we can describe actions on $C_0(X)$-algebras in the form of bundles, 
but we need {\it upper-semicontinuous Fell bundle} to handle the general case, 
which requires only the upper-semicontinuity instead of the continuity of norms in the axioms of Banach bundles. 
See \cite{MW2}, \cite{MW} for general crossed products and the treatment of upper-semicontinuous bundles. 

\subsection{Line Bundles}
A Fell bundle $E$ over a groupoid is called a Fell line bundle if each fiber of $E$ is one-dimensional. 
If $X$ is a locally compact space, then there is only one Fell line bundle over $X$, i.e., the trivial one. 
For a groupoid $G$, there is a canonical one-to-one correspondence between Fell line bundles over $G$ 
and {\it twists} over $G$---an extended notion of the second cohomology of groupoids. 
See \cite{K2},\cite{R} for twists and \cite{R2} for cohomologies. 

Fell line bundles are related to the theory of Cartan subalgebras for $C^*$-algebras. 
In the well-known theorem of Feldmam-Moore, a von Neumann algebra which have a Cartan subalgebra is described 
by an equivalence relation and a 2-cocycle (\cite{FM1},\cite{FM2}). 
In the case of $C^*$-algebras, equivalence relations are replaced by {\it topologically principal} \etale groupoids and 
2-cocyles are replaced by twists. 

\begin{dfn}(Renault, \cite{R}) 
An abelian $C^*$-subalgebra $B$ of a $C^*$-algebra $A$ is said to be a Cartan subalgebra if we have the following conditions:
\begin{itemize}
\setlength{\itemsep}{-11pt}
\item[{\rm (i)}] The subalgebra $B$ contains an approximate unit of $A$. \\
\item[{\rm (ii)}]The subalgebra $B$ is a maximal abelian subalgebra of $A$. \\ 
\item[{\rm (iii)}] The normalizer $N(B)$ of $B$ generates $A$. \\
\item[{\rm (iv)}] There exists a faithful conditional expectation of $A$ onto $B$. 
\end{itemize}
Here, $N(B) = \{a \in A\ |\ aBa^*, a^*Ba \subset B \}$. 
\end{dfn}

Renault (\cite{R}) defined the notion of Cartan subalgebras for $C^*$-algebras and proved a theorem analogous to the theorem of Feldman-Moore. 
\begin{thm}{\rm (Renault, \cite{R})} 
If $G$ is a locally compact Hausdorff \etale topologically principal second countable groupoid and $E$ is a Fell line bundle over $G$, 
then $C_0(E^{(0)}) = C_0(G^{(0)})$ is a Cartan subalgebra of $C^*_r(E)$. 
Conversely, if $A$ is a separable $C^*$-algebra and $B$ is a Cartan subalgebra of $A$, 
there exists a locally compact Hausdorff \etale topologically principal second countable groupoid $G$, a Fell line bundle $E$ over $G$, and 
an isomorphism of $C^*_r(E)$ onto $A$ which carries $C_0(G^{(0)})$ onto $B$. 
The groupoid $G$ and the Fell line bundle $E$ are unique up to isomorphism. 
\end{thm}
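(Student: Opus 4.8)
The plan is to prove the forward implication, the reconstruction (converse) implication, and the uniqueness clause separately. A unifying observation used in both the forward and converse directions is that $C^*_r(E)$ and $A$ each carry a \emph{faithful} conditional expectation onto the distinguished abelian subalgebra, so that a $*$-isomorphism defined on a dense $*$-subalgebra and compatible with these expectations extends automatically to the completions, exactly as in the argument of Lemma \ref{expec}.

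For the forward direction I would verify the four Cartan axioms for the pair $(C^*_r(E), B)$ with $B := C_0(G^{(0)}) = C_0(E^{(0)})$ (these coincide since $E$ is a line bundle). Axiom (iv) is part of the definition of $C^*_r(E)$: the restriction $C_c(E) \to C_c(E^{(0)})$ extends to a faithful conditional expectation onto $B$. For (i), cutoff functions $\chi \in C_c(G^{(0)})$ with $0 \le \chi \le 1$ form an approximate unit, since $\chi * f \to f$ in $C_c(E)$ once $\chi \equiv 1$ on $r(\mathrm{supp}\,f) \cup s(\mathrm{supp}\,f)$. For (iii), any continuous norm-one section of $E$ supported on an open bisection $S$ normalizes $B$ --- it implements the partial homeomorphism $(s|_S)^{-1} \circ r|_S$ of $G^{(0)}$ --- and finite sums of such sections are dense in $C^*_r(E)$. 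Axiom (ii) is the crux and is where topological principality enters: if $a \in C^*_r(E)$ commutes with $B$, then translating $a$ along bisections and applying the expectation shows the section-valued ``Fourier coefficients'' $\gamma \mapsto \widetilde a(\gamma) \in E_\gamma$ form a continuous section whose nonvanishing locus is open and contained in the isotropy subgroupoid, hence in its interior, which topological principality forces to be $G^{(0)}$; thus $a \in B$.

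For the converse I would build Renault's Weyl groupoid and twist. Identify $B$ with $C_0(X)$, $X = \widehat B$. Each normalizer $n \in N(B)$ yields a homeomorphism $\alpha_n$ between the open supports of $n^*n$ and $nn^*$, determined by $n^*(b\,n) = (b \circ \alpha_n)\,n^*n$ for $b \in B$. Let $G$ be the set of germs $[n,x]$ with $x \in \mathrm{dom}(\alpha_n)$, where $[n,x]=[m,y]$ iff $x=y$ and $\alpha_n,\alpha_m$ agree near $x$; composition $[n,x][m,y]=[nm,y]$ when $\alpha_m(y)=x$, inverse $[n,x]^{-1}=[n^*,\alpha_n(x)]$, unit space $X$. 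Topologizing $G$ so that the sets $\{[n,x] : x \in U \cap \mathrm{dom}(\alpha_n)\}$ are open makes $G$ an \etale groupoid with these sets as bisections; separability of $A$ gives second countability; maximal abelianness of $B$ (equivalently, the unique-extension property for pure states of $B$) is what makes $G$ Hausdorff and topologically principal, since a germ fixing $x$ and acting trivially nearby must be the unit at $x$. The Fell line bundle $E$ has fiber over $[n,x]$ the line $\C n$ modulo the identification of $n$ with $nb$ for $b \in B$ positive at $x$, and the sections arising from $N(B)$ verify hypotheses (i)--(ii) of Proposition \ref{construction}, which supplies the bundle topology. Finally, the tautological map sending a section of $E$ near $[n,\cdot]$ to the corresponding element of $A$ built from $n$ is a $*$-homomorphism $C_c(E) \to A$, restricts to the identity $C_0(X) = B$, and intertwines the reduced expectation of $C^*_r(E)$ with the faithful expectation $A \to B$; hence it extends to the desired isomorphism. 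Uniqueness follows because on either side the normalizers of $B$ are precisely the compactly supported continuous sections of the bundle over open bisections, so $G$ and $E$ are recovered from $(A,B)$ alone, yielding compatible isomorphisms between any two such pictures.

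The step I expect to be hardest is the reconstruction direction --- concretely, proving the Weyl groupoid is Hausdorff and topologically principal and that the tautological map $C_c(E) \to A$ is isometric for the reduced norm. All of this leans on the combination of maximal abelianness of $B$ and faithfulness of the conditional expectation; dropping faithfulness would only identify $A$ with a quotient of a full completion rather than with $C^*_r(E)$ itself.
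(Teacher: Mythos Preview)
The paper does not prove this theorem at all: it is stated as a result of Renault with a citation to \cite{R}, and no proof is supplied. There is therefore nothing in the paper to compare your proposal against.

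That said, what you have written is a reasonable outline of Renault's original argument. The forward direction (verifying the four Cartan axioms for $C_0(G^{(0)}) \subset C^*_r(E)$) is essentially as you describe, with topological principality used exactly where you indicate, to force the interior of the isotropy bundle down to $G^{(0)}$. The converse via the Weyl groupoid of germs of normalizers, the twist/line bundle built from the lines $\C n$, and the appeal to the faithful expectation to identify the completion with $C^*_r(E)$ is also the standard route. Your identification of the delicate points---Hausdorffness and topological principality of the Weyl groupoid, and isometry of the tautological map for the reduced norm---is accurate; in Renault's treatment both rest on Kumjian's observation that maximal abelianness of $B$ together with the faithful expectation is equivalent to the unique state-extension property for pure states of $B$, which is what you allude to. One small correction: in your uniqueness sketch, normalizers are not literally ``compactly supported continuous sections over open bisections'' (they need not have compact support), but they are sections over open bisections, and that is enough to recover $G$ and $E$ from $(A,B)$.
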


In the above theorem, Fell line bundles are used to fetch some extra information lost in the construction of groupoids. 
Therefore, for a groupoid $G$, the reduced $C^*$-algebras associated to Fell line bundles over $G$ are expected to share some properties. 
For example, the nuclearity is the one of such properties (We do not know other examples, but there should be a great deal of such properties). 

\begin{thm}
For a locally compact Hausdorff \etale groupoid $G$, the following conditions are equivalent: 
\begin{itemize}
\setlength{\itemsep}{-11pt}
\item[{\rm (i)}] $G$ is amenable. \\
\item[{\rm (ii)}] $C^*_r(E)$ is nuclear for all Fell line bundles over $G$. \\
\item[{\rm (iii)}] $C^*_r(E)$ is nuclear for some Fell line bundle over $G$. \\
\end{itemize}
\end{thm}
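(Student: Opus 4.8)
The plan is to prove the implications (i)$\Rightarrow$(ii)$\Rightarrow$(iii)$\Rightarrow$(i). The first is immediate from the main theorem: if $E$ is a Fell line bundle over $G$, then each fiber $E_x$ with $x\in G^{(0)}$ is a one-dimensional $C^*$-algebra, hence isomorphic to $\C$, which is nuclear; so when $G$ is amenable, Theorem \ref{nuclear} gives that $C^*_r(E)$ is nuclear. The implication (ii)$\Rightarrow$(iii) is trivial, since the trivial Fell line bundle $G\times\C$ (for which $C^*_r(G\times\C)\cong C^*_r(G)$) is a Fell line bundle over $G$, so the class of bundles appearing in (ii) and (iii) is non-empty.

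The substance is (iii)$\Rightarrow$(i). By Theorem \ref{classical}(ii) it suffices to deduce that $C^*_r(G)$ is nuclear from the nuclearity of $C^*_r(E)$, and the route is the groupoid analogue of the twisted-group-algebra argument. First I would form the conjugate Fell line bundle $\bar E$, with $\bar E_\gamma=\overline{E_\gamma}$, multiplication $\overline{e_1}\cdot\overline{e_2}:=\overline{e_1e_2}$ and involution $(\overline{e})^*:=\overline{e^*}$; one checks via Proposition \ref{construction} that this is again a Fell line bundle over $G$, and that $f\mapsto\overline{f}$ is a conjugate-linear $*$-isomorphism $C_c(E)\to C_c(\bar E)$ intertwining the canonical conditional expectations onto $C_0(G^{(0)})$, so that $C^*_r(\bar E)$ is the complex-conjugate algebra of $C^*_r(E)$ and hence again nuclear; therefore $C^*_r(E)\otimes C^*_r(\bar E)$ is nuclear. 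On the other hand, fiberwise there is a canonical identification $E_\gamma\otimes_\C\overline{E_\gamma}\cong\C$ via $e\otimes\overline{f}\mapsto f^*e\in E_{s(\gamma)}=\C$ (well defined and compatible with the Fell operations because $E$ is a line bundle), which expresses the trivial line bundle through $E$ and $\bar E$ --- equivalently, the twist of $E$ and its inverse sum to the trivial twist. So the goal reduces to exhibiting $C^*_r(G)$ as the range of a conditional expectation $C^*_r(E)\otimes C^*_r(\bar E)\to C^*_r(G)$: this makes $C^*_r(G)$ nuclear, and Theorem \ref{classical}(ii) then gives amenability of $G$.

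The hard part is this conditional expectation. For a discrete group $\Gamma$ (the situation of Abadie-Vicens) it comes almost for free, because the diagonal $\Delta_\Gamma$ is an honest subgroup of $\Gamma\times\Gamma$ and restriction to a subgroup is a conditional expectation onto $C^*_r(\Delta_\Gamma)\cong C^*_r(\Gamma)$ (the restricted twist being trivial); for a groupoid the diagonal subgroupoid $\Delta_G\cong G$ of $G\times G$ is closed but not open and does not carry the full unit space, so neither the inclusion $C^*_r(G)\hookrightarrow C^*_r(E)\otimes C^*_r(\bar E)$ nor the projection onto it is automatic. I would instead build the map from the regular representations of Lemma \ref{ccp}: $C^*_r(E)$ acts on $V_x=\bigoplus_{\gamma\in G_x}E_\gamma$ and $C^*_r(\bar E)$ on $\overline{V_x}$, the family $\{\pi_x\otimes\bar\pi_y\}_{x,y}$ computes the minimal norm on $C^*_r(E)\odot C^*_r(\bar E)$, and one compresses $\pi_x\otimes\bar\pi_x$ to the canonical ``diagonal'' subspace $D_x\subseteq V_x\otimes\overline{V_x}$ spanned by the norm-one vectors $\varepsilon_\gamma\otimes\overline{\varepsilon_\gamma}$, which is canonically isometric to $\ell^2(G_x)$; a direct computation then identifies this compression, on $C_c(E)\odot C_c(\bar E)$, with the map sending $f\otimes\overline{g}$ to $\lambda_x(h_{f,g})$, where $h_{f,g}(\gamma)=g(\gamma)^*f(\gamma)$ defines an element of $C_c(G)$. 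The remaining and most delicate point is to promote this to a genuine conditional expectation onto $C^*_r(G)$ --- in particular to produce a completely positive section $C^*_r(G)\to C^*_r(E)\otimes C^*_r(\bar E)$ --- using a partition of unity subordinate to bisections (over which $E$ is trivial) and the faithfulness of the expectation onto $C_0(G^{(0)})$ as in Lemma \ref{expec}. A cleaner alternative is to first develop the fiberwise tensor product $\bar E\otimes_G E$ of two Fell bundles over $G$, the analogue with Fell-bundle coefficients of the tensor product with a fixed $C^*$-algebra of Section 3, and to identify $C^*_r(\bar E\otimes_G E)\cong C^*_r(G)$ as a conditionally-expected subalgebra of $C^*_r(\bar E)\otimes C^*_r(E)$ directly; in either approach this compatibility check is where the work lies.
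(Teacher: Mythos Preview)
Your implications (i)$\Rightarrow$(ii)$\Rightarrow$(iii) match the paper. For (iii)$\Rightarrow$(i), however, your route is genuinely different from the paper's and, as you yourself flag, it is incomplete at exactly the point that matters.

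Your plan is to deduce nuclearity of $C^*_r(G)$ from that of $C^*_r(E)\otimes C^*_r(\bar E)$ by exhibiting $C^*_r(G)$ as the range of a conditional expectation, and then quote Theorem~\ref{classical}(ii). The compression you describe does give a well-defined c.c.p.\ map $\Phi:C^*_r(E)\otimes C^*_r(\bar E)\to C^*_r(G)$, but a c.c.p.\ surjection from a nuclear algebra does not by itself force the target to be nuclear; you still need a completely positive section, i.e.\ an embedding of $C^*_r(G)$ into $C^*_r(E)\otimes C^*_r(\bar E)$ split by $\Phi$. Here is the concrete obstruction: $C^*_r(E)\otimes C^*_r(\bar E)$ is the reduced algebra of a Fell bundle over $G\times G$, and $C^*_r(G)$ would have to sit over the diagonal $\Delta_G$. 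When $G^{(0)}$ is not discrete, the diagonal of $G^{(0)}\times G^{(0)}$ is closed but not open, so already at the level of unit spaces there is no $*$-homomorphism $C_0(G^{(0)})\to C_0(G^{(0)}\times G^{(0)})$ landing on the diagonal; your $\Phi$ restricted to $C_0(G^{(0)}\times G^{(0)})$ is restriction to the diagonal, and this has no multiplicative (or even completely positive, module-compatible) section in general. The partition-of-unity maps $h\mapsto\sum_j(\phi_j h\,u_j)\otimes\overline{u_j}$ that you allude to are right inverses to $\Phi$ on $C_c(G)$, but they are neither positive nor bounded in any evident way, because the local trivializations $u_j$ do not agree on overlaps. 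The ``fiberwise tensor product $\bar E\otimes_G E$'' alternative runs into the same issue: one would need an embedding of $C^*_r(\bar E\otimes_G E)$ into $C^*_r(\bar E)\otimes C^*_r(E)$, and the balanced-over-$C_0(G^{(0)})$ tensor product does not sit inside the minimal tensor product when $G^{(0)}$ is non-discrete. In short, the Abadie--Vicens/Quigg argument genuinely uses that $\Gamma^{(0)}$ is a point.

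The paper avoids this entirely by not passing through Theorem~\ref{classical}(ii) at all. Instead it proves amenability of $G$ directly from the definition: given a compact $K\subset G$ and $\varepsilon>0$, it uses local triviality of the line bundle to choose unit sections $f_l\in C_c(E)$ over bisections covering $K$, feeds these into a single c.c.p.\ factorisation $C^*_r(E)\to M_n(\C)\to C^*_r(E)$ coming from nuclearity, and out of the second map builds a vector $\eta'_\varphi\in\ell^2_n\otimes\ell^2_n\otimes C_c(E)$ whose fiberwise norms $\zeta(\gamma)=\|\eta'_\varphi(\gamma)\|$ give a function $\zeta\in C_c(G)$ with $\|\zeta\|_{L^2(G)}\le 1$ and $\zeta^* * \zeta\approx 1$ on $K$. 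The one-dimensionality of the fibres is used precisely to turn sections of $E$ into honest scalar functions on $G$. This is the step your approach lacks: a mechanism for passing from $C^*_r(E)$-data to $C_c(G)$-data without first embedding $C^*_r(G)$ anywhere.
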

\begin{proof}
The condition (i) implies (ii) by Theorem \ref{nuclear}, and obviously (ii) implies (iii). We will prove that (iii) implies (i). 
The proof is based on \cite[Theorem 5.6.18]{BO}. 

Let $E$ be a Fell bundle over $G$ such that $C^*_r(E)$ is nuclear. 
By the definition of amenability, it suffices to show that there exists a net $\{h_i\}$ of compactly supported positive definite functions on $G$ 
with $\sup_{x \in G^{(0)}} |h(x)| \leq 1$ which converges to $1$ uniformly on compact subsets of $G$. 
We will construct $h_i$ of the form $h_i = \zeta_i^**\zeta_i$ with $\|\zeta_i \|_{L^2(G)}\leq 1$ , where
\begin{equation*}
\|\zeta_i \|_{L^2(G)} = \sup_{x \in G^{(0)}} \left( \sum_{\gamma \in G_x} | \zeta_i(\gamma) |^2 \right)^{1/2}. 
\end{equation*}

Fix $\varepsilon >0$ and a compact subset $K$ of $G$ such that $K^{-1}=K$.
Take relatively compact bisections $U_1,\cdots,U_n,V_1,\cdots,V_n$ of $G$ satisfying 
$K \subset U_1 \cup \cdots \cup U_n$, $\overline{U_l} \subset V_l$ and that $E$ is trivial on each $V_l$. 
Note that Banach line bundles are always ``locally trivial" by \cite[Remark 13.19]{FD1}. 
Let $\Phi_l : E|_{V_l} \rightarrow V_l \times \C$ be an isomorphism as Banach bundles over $V_l$. 
Then $\Phi_l$ is an isomorphism as complex vector bundles and isometric on each fiber. 
Let $\tilde{f_l}$ be a continuous function on $G$ such that $\tilde{f_l}$ is identical to $1$ on $\overline{U_l}$, $0\leq \tilde{f_l} \leq 1$ 
and the support of $\tilde{f_l}$ is contained in $V_l$. 
Define a continuous section $f_l$ of $E$ by 
\begin{eqnarray*}
f_l(\gamma) = \left\{ 
\begin{array}{cc} \Phi_l^{-1}(\gamma, \tilde{f_l}(\gamma)) & (\gamma \in V_l), \\
0_{\gamma} & (\gamma \not\in V_l), \end{array} \right. 
\end{eqnarray*}
where $0_{\gamma}$ is the zero element of $E_\gamma$. 

Since $C^*_r(E)$ is nuclear by the assumption, there exist c.c.p.~maps $\psi:C^*_r(E)\rightarrow M_n(\C)$ and $\varphi:M_n(\C)\rightarrow C^*_r(E)$ satisfying 
\begin{eqnarray*}
&&\|\varphi \circ \psi (f_l) - f_l\| \leq \varepsilon, \\
&&\|\varphi \circ \psi (f_l^**f_l) - f_l^**f_l\| \leq \varepsilon
\end{eqnarray*}
for $l=1,\cdots,n$. Note that $f_l^**f_l \in C_c(E^{(0)})$ and $\| f_l^**f_l \| = \| f_l \|_{\infty}^{1/2} = 1$. 

Let $\{e_{ij}\}_{i,j}$ be a matrix unit of $M_n(\C)$ and let $b = [\varphi(e_{ij})]_{i,j}^{1/2} \in M_n(C^*_r(E))$. 
Note that $[\varphi(e_{ij})]_{ij} \geq 0$ since $[e_{ij}]_{ij} \geq 0$ in $M_n(M_n(\C))$. 
Put $\eta_{\varphi} = \sum_{j,k} \xi_j \otimes \xi_k \otimes b_{kj} \in \ell^2_n \otimes \ell^2_n \otimes C^*_r(E)$, where 
$\{\xi_j\}_{j=1}^n$ is an orthonormal basis of the $n$-dimensional Hilbert space $\ell^2_n$ (the inner product is assumed to be linear for the second-variable). 
We regard $\ell^2_n \otimes \ell^2_n \otimes C^*_r(E)$ as a right Hilbert $C^*_r(E)$-module in the natural way. 
Then we have $\langle \eta_{\varphi}, (a\otimes 1\otimes 1)\eta_{\varphi}\rangle = \varphi(a)$ for $a \in M_n(\C)$ since 
\begin{eqnarray*}
\langle \eta_{\varphi}, (a\otimes 1\otimes 1)\eta_{\varphi}\rangle 
= \sum_{j,k,l} \langle \xi_j, a\xi_l \rangle b_{kj}^*b_{kl}
= \sum_{j,l} a_{jl} \varphi(e_{jl}) = \varphi(a). 
\end{eqnarray*}
In particular, $\|\eta_{\varphi}\|^2 = \|\varphi(1)\| \leq 1$. 

Put $a_l = \psi(f_l)$. 
Then we have $|(a_l \otimes 1 \otimes 1)\eta_{\varphi} - \eta_{\varphi}f_l|^2 \leq 3\varepsilon$ since 
\begin{eqnarray*}
&&|(a_l \otimes 1 \otimes 1)\eta_{\varphi} - \eta_{\varphi}f_l|^2 \\
&&= | (a_l \otimes 1 \otimes 1)\eta_{\varphi} |^2 
- \langle \eta_{\varphi}f_l,(a_l \otimes 1 \otimes 1)\eta_{\varphi} \rangle 
- \langle (a_l \otimes 1 \otimes 1)\eta_{\varphi},\eta_{\varphi}f_l \rangle 
+ | \eta_{\varphi}f_l |^2 \\
&&= \varphi(a_l^*a_l) -f_l^*\varphi(a_l) -\varphi(a_l^*)f_l +f_l^*f_l \\
&&= (\varphi(a_l^*a_l) - f_l^*f_l) +f_l^*(f_l - \varphi(a_l)) + (f_l - \varphi(a_l))^*f_l \\
&&\leq 3\varepsilon. 
\end{eqnarray*}
Let us take $\eta'_{\varphi} \in \ell^2_n\odot \ell^2_n\odot C_c(E)$ satisfying 
$\|\eta_{\varphi} - \eta'_{\varphi}\| \leq \varepsilon$ and $\|\eta'_{\varphi}\| \leq 1$. 
We write $\eta'_{\varphi} = \sum_{j,k} \xi_j \otimes \xi_k \otimes \zeta_{kj}$ for some $\zeta_{kj} \in C_c(E)$. 
For $\gamma \in G$, define 
\begin{equation*}
\eta'_{\varphi}(\gamma) = \sum_{j,k} \xi_j \otimes \xi_k \otimes \zeta_{kj}(\gamma) \in \ell^2_n \otimes \ell^2_n \otimes E_{\gamma}, 
\end{equation*}
where $\ell^2_n \otimes \ell^2_n \otimes E_{\gamma}$ is considered as a right Hilbert $E_{s(\gamma)}$-module. 
Since $E_{s(\gamma)}$ is isomorphic to $\C$, this is just an $n^2$-dimensional Hilbert space. 
Put $\zeta(\gamma) = \| \eta'_{\varphi}(\gamma) \|$. 
Then $\zeta$ is a compactly-supported continuous function on $G$ and satisfies $\| \zeta \|_{L^2(G)} \leq 1$ because 
\begin{eqnarray*}
\| \zeta \|_{L^2(G)}^2 &=& \sup_{x \in G^{(0)}} \sum_{\gamma \in G_x} \sum_{k,j} \| \zeta_{kj}(\gamma)^*\zeta_{kj}(\gamma) \| \\
&=& \sup_{x \in G^{(0)}} \left\| \sum_{k,j} \sum_{\gamma \in G_x} \zeta_{kj}(\gamma)^*\zeta_{kj}(\gamma) \right\| \\
&=& \sup_{x \in G^{(0)}} \| \langle \eta'_{\varphi},\eta'_{\varphi} \rangle (x) \| \leq 1. 
\end{eqnarray*}

Hence, all of we have to show is that $\sup_{\gamma \in K} |1-\zeta^**\zeta(\gamma)|$ is small. 
First, we have $\| f_l^**f_l - \varphi(\psi(f_l)^*\psi(f_l)) \| \leq 4\varepsilon$ since
\begin{eqnarray*}
f_l^**f_l &\approx_{2\varepsilon}& \varphi \circ \psi(f_l)^*\varphi \circ \psi(f_l) \\
&\leq& \varphi(\psi(f_l)^*\psi(f_l)) \\
&\leq& \varphi \circ \psi (f_l^**f_l) \approx_{\varepsilon} f_l^**f_l. 
\end{eqnarray*}
Hence we have
\begin{eqnarray*}
f_l^**f_l &\approx_{4\varepsilon}& \varphi(a_l^*a_l) \\
&=& \langle (a_l\otimes 1\otimes 1)\eta_{\varphi},(a_l\otimes 1\otimes 1)\eta_{\varphi}\rangle \\
&\approx_{\sqrt{3\varepsilon}}& \langle (a_l\otimes 1\otimes 1)\eta_{\varphi},\eta_{\varphi}f_l\rangle \\
&\approx_{2\varepsilon}& \langle (a_l\otimes 1\otimes 1)\eta'_{\varphi},\eta'_{\varphi}f_l\rangle .
\end{eqnarray*}

Fix $1\leq l\leq n$ and $\gamma \in U_l$. Put $x=s(\gamma)$. 
Since $|f_l(\gamma)|^2$ is a positive element of norm $1$, we have $f_l^**f_l(x)=|f_l(\gamma)|^2=1$, 
where $1$ is the unit of the $C^*$-algebra $E_x$ (which is isomorphic to $\C$). 
In addition, for $\beta \in G_x$,
\begin{eqnarray*}
\eta'_{\varphi}f_l(\beta) = \sum_{\alpha \in G_x} \eta'_{\varphi}(\beta \alpha^{-1})f_l(\alpha)
= \eta'_{\varphi}(\beta \gamma^{-1})f_l(\gamma). 
\end{eqnarray*}
Hence, $\|\eta'_{\varphi}f_l(\beta)\| \leq \|\eta'_{\varphi}(\beta \gamma^{-1})\|$. 

Finally, we have 
\begin{eqnarray*}
1 &=& \| f_l^**f_l(x) \| \\
&\approx& \| \langle (a_l\otimes 1\otimes 1)\eta'_{\varphi},\eta'_{\varphi}f_l\rangle (x) \| \\
&=& \left\| \sum_{\beta \in G_x} \langle (a_l\otimes 1\otimes 1)\eta'_{\varphi}(\beta),\eta'_{\varphi}f_l(\beta) \rangle \right\| \\
&\leq& \sum_{\beta \in G_x} \|\eta'_{\varphi}(\beta)\| \|\eta'_{\varphi}f_l(\beta)\| \\
&\leq& \sum_{\beta \in G_x} \|\eta'_{\varphi}(\beta)\| \|\eta'_{\varphi}(\beta \gamma^{-1})\| \\
&=& \zeta^**\zeta(\gamma^{-1}) \leq 1
\end{eqnarray*}
and therefore $1 \approx \zeta^**\zeta(\gamma^{-1})$. 
This leads to the conclusion since $K^{-1}=K$ and the error in the above does not depend on $\gamma$. 
\end{proof}
 
\bibliographystyle{amsplain} 

\bibliography{ref.bib} 

\providecommand{\bysame}{\leavevmode\hbox to3em{\hrulefill}\thinspace}
\providecommand{\MR}{\relax\ifhmode\unskip\space\fi MR }
\providecommand{\MRhref}[2]{%
  \href{http://www.ams.org/mathscinet-getitem?mr=#1}{#2}
}
\providecommand{\href}[2]{#2}
\begin{thebibliography}{10}

\bibitem{A}
F.~Abadie-Vicens, \emph{Tensor products of {F}ell bundles over discrete
  groups}, arXiv:funct-an/9712006v1 (1997).

\bibitem{Av}
D.~Avitzour, \emph{Free products of {$C^*$}-algebras}, Trans. Amer. Math. Soc.
  \textbf{271} (1982), no.~2, 423--435.

\bibitem{BO}
N.~P. Brown and N.~Ozawa, \emph{{$C^*$}-algebras and finite-dimensional
  approximations}, Graduate Studies in Mathematics, vol.~88, American
  Mathematical Society, 2008.

\bibitem{FM1}
J.~Feldman and C.~C. Moore, \emph{Ergodic equivalence relations, cohomology,
  and von {N}eumann algebras. {I}}, Trans. Amer. Math. Soc. \textbf{234}
  (1977), no.~2, 289--324.

\bibitem{FM2}
\bysame, \emph{Ergodic equivalence relations, cohomology, and von {N}eumann
  algebras. {II}}, Trans. Amer. Math. Soc. \textbf{234} (1977), no.~2,
  325--359.

\bibitem{FD1}
J.~M.~G. Fell and R.~S. Doran, \emph{Representations of *-algebras, locally
  compact groups, and {B}anach *-algebraic bundles}, vol.~1, Academic Press,
  1988.

\bibitem{FD2}
\bysame, \emph{Representations of *-algebras, locally compact groups, and
  {B}anach *-algebraic bundles}, vol.~2, Academic Press, 1988.

\bibitem{KW}
E.~Kirchberg and S.~Wassermann, \emph{Operations on continuous bundles of
  {$C^*$}-algebras}, Math. Ann. \textbf{303} (1995), no.~4, 677--697.

\bibitem{K2}
A.~Kumjian, \emph{On {$C\sp \ast$}-diagonals}, Canad. J. Math. \textbf{38}
  (1986), no.~4, 969--1008.

\bibitem{K}
\bysame, \emph{Fell bundles over groupoids}, Proc. Amer. Math. Soc.
  \textbf{126} (1998), no.~4, 1115--1125.

\bibitem{MW2}
P.~S. Muhly and D.~P. Williams, \emph{Equivalence and disintegration theorems
  for {F}ell bundles and their {$C\sp *$}-algebras}, Dissertationes Math
  \textbf{456} (2008), 1--57.

\bibitem{MW}
\bysame, \emph{Renault's equivalence theorem for groupoid crossed products},
  NYJM Monographs, vol.~3, State University of New York, University at Albany,
  Albany, NY, 2008.

\bibitem{Q}
J.~C. Quigg, \emph{Discrete {$C^*$}-coactions on {$C^*$}-algebras}, J. Austral.
  Math. Soc. Ser. \textbf{60} (1996), no.~2, 204--221.

\bibitem{RW}
I.~Raeburn and D.~P. Williams, \emph{Morita equivalence and continuous-trace
  {$C^*$}-algebras}, Mathematical Surveys and Monographs, vol.~60, American
  Mathematical Society, 1998.

\bibitem{R2}
J.~Renault, \emph{A groupoid approach to {$C\sp \ast$}-algebras}, Lecture Notes
  in Mathematics, vol. 793, Springer, 1980.

\bibitem{R}
\bysame, \emph{Cartan subalgebras in {$C\sp *$}-algebras}, Irish Math. Soc.
  Bull. (2008), no.~61, 29--63.

\end{thebibliography}

\end{document}